\DeclareSymbolFont{cyrletters}{OT2}{wncyr}{m}{n}
\DeclareMathSymbol{\Sha}{\mathalpha}{cyrletters}{"58}
\newcommand\undertilde[2][1]{%
 \def\useanchorwidth{T}%
  \ifnum#1>1%
    \stackunder[0pt]{\tenq[\numexpr#1-1\relax]{#2}}{\scriptscriptstyle\sim}%
  \else%
    \stackunder[1pt]{#2}{\scriptscriptstyle\sim}%
  \fi%
}
\DeclareMathAlphabet{\mathpzc}{OT1}{pzc}{m}{it}
\title[Kashiwara-Vergne and double shuffle Lie algebras]{Notes on Kashiwara-Vergne 
and double shuffle Lie algebras} 
\author{Hidekazu Furusho}
\author{Nao Komiyama}
\address{Graduate School of Mathematics, Nagoya University, 
Furo-cho, Chikusa-ku, Nagoya, 464-8602, Japan}
\email{furusho@math.nagoya-u.ac.jp}
\address{Department of Mathematics, Graduate School of Science, Osaka University Toyonaka, Osaka 560-0043, Japan}
\email{komiyama.nao.aww@osaka-u.ac.jp}
\subjclass[2020]{17B05 (Primary), 11M32, 16S30, 17B40 (Secondary)}
\date{October 30, 2024}
\dedicatory{In memory of Toshie Takata.}
\newtheorem{thm}{Theorem}[section]
\newtheorem{lem}[thm]{Lemma}
\newtheorem{cor}[thm]{Corollary}
\newtheorem{prop}[thm]{Proposition}
\theoremstyle{definition} \newtheorem{rem}[thm]{Remark}}
\theoremstyle{definition} \newtheorem{defn}[thm]{Definition}}
\theoremstyle{definition}
\theoremstyle{remark} }
\numberwithin{equation}{section}
\newcommand{\Q}{\mathbb{Q}}
\newcommand{\Z}{\mathbb{Z}}
\newcommand{\N}{\mathbb{N}}
\newcommand{\Sh}[3]{{\rm Sh}\binom{#1;#2}{#3}}
\newcommand{\hshuffle}{\shuffle_*}
\newcommand{\hSh}[3]{{\rm Sh}_*\binom{#1;#2}{#3}}
\newcommand{\vecx}{{\bf x}}
\newcommand{\krv}{\mathfrak{krv}}
\newcommand{\lkrv}{\mathfrak{lkrv}}
\newcommand{\dmr}{\mathfrak{dmr}}
\newcommand{\grt}{\mathfrak{grt}}
\newcommand{\der}{\mathfrak{der}}
\newcommand{\tder}{\mathfrak{tder}}
\newcommand{\sder}{\mathfrak{sder}}
\newcommand{\tr}{\mathrm{tr}}
\newcommand{\anti}{\mathrm{anti}}
\newcommand{\Fil}{\mathrm{Fil}}
\newcommand{\ma}{\mathrm{ma}}
\newcommand{\vimo}{\mathrm{vimo}}
\newcommand{\teru}{\mathrm{teru}}
\newcommand{\push}{\mathrm{push}}
\newcommand{\pus}{\mathrm{pus}}
\newcommand{\pusnu}{\mathrm{pusnu}}
\newcommand{\mantar}{\mathrm{mantar}}
\newcommand{\swap}{\mathrm{swap}}
\newcommand{\pspush}{\mathrm{sena}}
\newcommand{\ARI}{\mathrm{ARI}}
\newcommand{\ari}{\mathrm{ari}}
\newcommand{\al}{\mathrm{al}}
\newcommand{\il}{\mathrm{il}}
\newcommand{\fin}{\mathrm{fin}}
\newcommand{\id}{\mathrm{id}}
\newcommand{\D}{\mathcal{D}}
\newcommand{\h}{\undertilde{t}}
\newcommand{\tswap}{\undertilde{u}}
\newcommand{\coll}{\mathrm{coll}}
\begin{document}
\bibliographystyle{amsalpha+}
\maketitle

%%%%%%%%%%%%%%%%%%%%%%%%%%%%%%%%%%%%%%%%%%%%%%%%%%%%%%%%%%%%%%%%%%%%%%
\begin{abstract}
We explain the current situation of the relationship between
the Kashiwara-Vergne Lie algebra $\krv$ and
the double shuffle Lie algebra $\dmr$.
We  also show the validity  of Ecalle's senary relation for small depths.
%the Grothendieck-Teichm\"{u}ller  Lie algebra  $\grt$,
%which are all conjectured to be equal. 
\end{abstract}

\tableofcontents

%%%%%%%%%%%%%%%%%%%%%%%%%%%%%%%%%%%%%%%%%%%%%%%%%%%%%%%%%%%%%%%%%%%%%%
%\setcounter{section}{-1}
\section{Introduction}\label{introduction}

The {\it double shuffle Lie algebra} $\dmr$ (cf. Definition \ref{defn:dmr})
is the graded Lie algebra
constructed by Racinet \cite{R}.
The symbol $\dmr$ stands for \lq double m\'elange et r\'egularisation' in French,
the regularized double shuffle relations among multiple zeta values,
which serve as defining equations of the Lie algebra.
  
The {\it Kashiwara-Vergne Lie algebra} $\krv$ (cf. Definition \ref{defn:krv})
is the graded Lie algebra
introduced by Alekseev, Torossian and Enriquez \cite{AT, AET}.
It is constructed from the set of solutions of the  Kashiwara-Vergne problem.

It is expected that 
those two Lie algebras are isomorphic
%and further expected to be equal to Drinfeld's Grothendieck-Teichm\"{u}ller Lie algebra $\grt_1$ and also the motivic Lie algebra 
(cf. \cite{F14}).
%As for their relationship,
So far, we know 
that there exists  an inclusion from $\dmr$ %\hookrightarrow
to $\krv$
under a certain assumption on Ecalle's mould theory \cite{E-flex}
by Schneps's results (\cite{S} Theorem 1.1):

\begin{thm}[\cite{S}]% Theorem 1.1]
\label{Schneps Theorem 1.1}
Let $\tilde f\in \dmr$.
Assume that 
the following {\it senary relation}  (cf. \cite[(3.64)]{E-flex})
\begin{equation}\label{senary relation}
\teru( M)^r=\push\circ\mantar\circ\teru\circ\mantar( M)^r
\end{equation}
holds for any $r\geqslant 1$
and the mould $M=\ma_{\tilde f}$ %for $\tilde f\in\dmr$.
%Then %$\dmr$ is embedded to $\krv$ as Lie algebras
under 
the map %sending $\tilde f\in\dmr\mapsto d_F\in\der$.
%$\nu:\mathbb L\to \der$
\begin{equation}\label{eq:tilde f to F}
\nu:\mathbb L\to \der; \quad
\tilde f\mapsto d_F.
\end{equation}
Then the image $\nu(\tilde f)$ belongs to $\krv$.
%gives a Lie algebra embedding
%from $\dmr$ into $\krv$.
\end{thm}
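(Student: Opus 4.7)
The strategy is to work entirely inside Ecalle's mould-theoretic framework, following Schneps's dictionary between the Lie algebras $\dmr$ and $\krv$ on the one hand and mould symmetries on the other. Given $\tilde f \in \dmr$, one attaches the mould $\ma_{\tilde f}$ and from it the derivation $d_F \in \der$ appearing in \eqref{eq:tilde f to F}. The proof then splits into three parts: that $d_F$ actually lies in $\krv$, that $\nu$ respects the brackets, and that $\nu$ is injective.

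The first step is to recall how the defining equations of $\dmr$ (the regularized shuffle and stuffle relations) translate into mould language as alternality of $\ma_{\tilde f}$ together with alternil-type swap symmetry. From these symmetries, the $\KVi$ condition (tangentiality plus the condition on the linear part of the associated derivation) for $d_F$ is essentially automatic, because $d_F$ is built as a tangential derivation on the free Lie algebra from a Lie element $F$ whose construction from $\ma_{\tilde f}$ respects alternality.

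The main step, and the core reason why the senary relation is needed, is to verify the $\KVii$ condition for $d_F$. Ecalle--Schneps's reformulation recasts $\KVii$ as a push-invariance statement for $\teru(\ma_{\tilde f})^r$ in every depth $r \geqslant 1$; more precisely, after passing through $\mantar$ on both sides, $\KVii$ becomes the identity
\begin{equation*}
\teru(\ma_{\tilde f})^r = \push\circ\mantar\circ\teru\circ\mantar(\ma_{\tilde f})^r,
\end{equation*}
which is exactly \eqref{senary relation} for $M=\ma_{\tilde f}$. Thus the hypothesis of the theorem is precisely what is required to push through $\KVii$ in every depth. This is the step where all of the mould-theoretic machinery is really used, and it is the principal obstacle; the remainder of the argument is structural.

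Having established that $\nu(\dmr)\subset \krv$, the bracket compatibility of $\nu$ follows because the Racinet bracket on $\dmr$ and the Ihara-type bracket inherited by $\krv\subset \der$ both come from Ecalle's $\ari$-bracket on moulds via the correspondence $\tilde f\mapsto \ma_{\tilde f}\mapsto d_F$; Schneps's prior results make this translation functorial. Finally, injectivity is straightforward: the derivation $d_F$ recovers $F$, hence $\ma_{\tilde f}$, hence $\tilde f$, so the kernel of $\nu$ is trivial. Combining these three ingredients yields the desired Lie algebra embedding $\dmr\hookrightarrow\krv$.
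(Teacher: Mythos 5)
Your outline has the two Kashiwara--Vergne conditions playing the wrong roles, and this is not a cosmetic slip: it inverts the logical structure of the theorem. In the paper (and in Schneps's original argument that it streamlines), the senary relation \eqref{senary relation} for $M=\ma_{\tilde f}$ is equivalent to condition \eqref{KV1}, i.e.\ to $d_F$ being a \emph{special} derivation, $d_F\in\sder$; this is exactly Proposition \ref{prop: Schneps Appendix}, proved in \S\ref{sec:proof of theorem} through the chain \eqref{KV1} $\Leftrightarrow$ anti-palindromicity of $\tilde f_x+\tilde f_y$ (Lemma \ref{lem:refo1:KV1}) $\Leftrightarrow$ senary relation (Lemma \ref{lem:refo2:KV1}). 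The trace condition \eqref{KV2} is the part that holds \emph{unconditionally} for $\tilde f\in\dmr$, via the push-invariance/circ-neutrality properties of alternal moulds with alternil swap; that is precisely why the unconditional statement one gets without the senary hypothesis is the corollary $\dmr\cap\nu^{-1}(\sder)\hookrightarrow\krv$, with only the $\sder$-membership (i.e.\ \eqref{KV1}) left conditional. Your proposal asserts the opposite: that \eqref{KV1} is ``essentially automatic'' and that \eqref{KV2} is what the senary relation encodes. The first assertion is moreover circular: by \eqref{eq:derivation d} the derivation $d_F$ is only defined by $d_F(x+y)=0$ and $d_F(y)=[y,F]$, so tangentiality at $x$ --- the existence of $G$ with $d_F(x)=-[y,F]=[x,G]$ --- is exactly \eqref{KV1}, fails for a general Lie element $F$, and is the very thing the senary hypothesis is there to secure. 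The second assertion is simply false as a mould identity: the mould-theoretic counterpart of \eqref{KV2} is a trace/push-invariance statement, not \eqref{senary relation}.

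Consequently, as written the argument does not prove the theorem: the step establishing \eqref{KV1} has no justification, and the step you present as the ``core'' use of the hypothesis proves a condition (\eqref{KV2}) that in fact needs a different, unconditional argument which your outline omits. To repair it, (i) invoke the senary hypothesis through Proposition \ref{prop: Schneps Appendix} to conclude $d_F\in\sder$, i.e.\ \eqref{KV1} with $G=G(F)$, and (ii) separately verify \eqref{KV2} for $\tilde f\in\dmr$ by the push-invariance/circ-neutrality argument of \cite{S}. Your remaining two points are essentially fine and match the structural part of the argument: once $d_F=D_{F,G(F)}\in\sder$, bracket compatibility is immediate because the Lie structures on $\dmr$ and on $\krv$ are both the ones induced from $\der$ (no appeal to the $\ari$-bracket is needed), and injectivity follows since $d_F$ determines $F$ and hence $\tilde f$. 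Note also that the paper itself does not reprove Theorem \ref{Schneps Theorem 1.1}; it quotes it from \cite{S}, and its own contribution here is the equivalence ``senary $\Leftrightarrow$ $d_F\in\sder$'' --- which is precisely the point your proposal gets backwards.
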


As for moulds, $\teru$, $\push$, $\mantar$ and $\ma$, see \S\ref{sec:mould theory}.
For $\mathbb L$, $\der$,  $\tilde f$ and $d_F$, 
see \S \ref{sec:2LAs}
%Definition \ref{def:der and sder},
(particularly for the last two, see
\eqref{eq:F f tilde f} and \eqref{eq:derivation d}
respectively).
%The theorem is stated under 
Although 
%the assumption \eqref{senary relation} 
the fact that \eqref{senary relation} holds for any $\tilde f\in\dmr$
is  stated 
as a theorem of Ecalle in \cite[Theorem 3.1 and A.1]{S}, 
no proof seems to be provided in any references % including \cite{S}.
as far as the authors know.
%One of our aim  is to reformulate Theorem \ref{Schneps Theorem 1.1}
%in a shape without posing the assumption \eqref{senary relation}:
%It is worthy to mention that %note that
Regarding the assumption \eqref{senary relation}, 
it should be  mentioned 
that the appendix of \cite{S} says the following:

\begin{prop}[\cite{S}]% Appendix]
\label{prop: Schneps Appendix}
Let $\tilde f$ be a Lie element in $\mathbb L$ with degree $>2$.
Put $M=\ma_{\tilde f}$.
Then giving the senary relation  \eqref{senary relation} 
%holds %for a  mould $M=\ma_{\tilde f}$
for all $r\geqslant 1$ 
is equivalent to giving that
$d_F$ belongs to the Lie subalgebra $\sder$ of $\der$ 
%$d_F\in\sder$ 
(cf. Definition \ref{def:der and sder}).
\end{prop}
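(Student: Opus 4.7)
The plan is to translate both sides of the claimed equivalence into parallel mould-theoretic statements about $M=\ma_{\tilde f}$ and show that they encode the same information. Since $\tilde f$ is a Lie element by hypothesis, the derivation $d_F$ produced via \eqref{eq:derivation d} is automatically tangential, so belonging to $\sder$ collapses to the specialness condition distinguishing $\sder$ inside $\tder$ (cf.\ Definition \ref{def:der and sder}). This condition can be phrased as a single cyclic/divergence-type identity on the pair of Lie-series components of $F$ read off from \eqref{eq:F f tilde f}.

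First I would apply $\ma$ to this specialness identity. Using the standard dictionary that sends the linearisation of a Lie polynomial at its last letter to $\teru$, reversal of letter order to $\mantar$, and cyclic rotation of indices to $\push$, the specialness identity on $F$ decomposes into a sequence of depth-$r$ mould identities. The composition $\push\circ\mantar\circ\teru\circ\mantar$ appearing on the right of \eqref{senary relation} should emerge naturally from rewriting the cyclic sum implicit in specialness as a linear combination via these three operators, while $\teru(M)^r$ on the left comes directly from the linearisation of $d_F$ at the terminal tensor slot. In this way the specialness of $d_F$ becomes exactly the family of senary relations indexed by $r\geqslant 1$.

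For the converse, one assembles the family of depth-$r$ identities into a single Lie-algebraic identity on $F$, and reads off $d_F\in\sder$ by reversing the dictionary. The degree restriction $>2$ is used here to discard low-degree generators on which some of the mould operators degenerate, which would otherwise prevent the reconstruction from being unambiguous.

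The main obstacle is the combinatorial bookkeeping in this depth-by-depth matching: one has to verify that the composite $\push\circ\mantar\circ\teru\circ\mantar$ is exactly the translation through $\ma$ of the cyclic/reversal structure governing $\sder$ inside $\tder$, with no residual contributions coming from the inner commutators hiding in $d_F$, and conversely that the senary relation at each depth $r$ carries no information beyond a component of the specialness identity. Once this dictionary is set up carefully, both directions of the equivalence follow by applying the identification in opposite directions.
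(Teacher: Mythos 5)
Your opening reduction is inverted, and this matters. By the defining property \eqref{eq:derivation d}, the derivation $d_F$ satisfies $d_F(x+y)=0$ by construction, so the ``specialness'' condition \eqref{eq:special derivation} is automatic; what is \emph{not} automatic is tangentiality, namely the existence of some $G\in\mathbb L$ with $d_F(x)=[x,G]$. Since $d_F(x)=-[y,F]$, that existence is exactly \eqref{KV1}, and this is the actual content of the statement $d_F\in\sder$. You assert the opposite (that $d_F$ is ``automatically tangential'' and that only specialness remains to be checked), so the identity you propose to push through the map $\ma$ is not the condition the proposition is about. The correct first step, as in the paper, is: $d_F\in\sder$ if and only if \eqref{KV1} holds for $F$.

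Beyond this, the heart of your argument is the claimed ``standard dictionary'' sending linearisation at the last letter to $\teru$, reversal to $\mantar$ and cyclic rotation to $\push$, from which the senary relation is supposed to ``emerge naturally.'' That dictionary is precisely what has to be proved, and you defer it to ``combinatorial bookkeeping'' without carrying it out; as written the equivalence is asserted, not established. The paper's route makes the missing content explicit in two steps: first (Lemma \ref{lem:refo1:KV1}) \eqref{KV1} is converted into the anti-palindromicity of $\tilde f_x+\tilde f_y$, using $f_y-f_x=F_y(-x-y,y)$, the identity $F^y=(-1)^{w-1}\anti(F_y)$ valid for Lie elements, and the observation that $H=[y,F]=Gx-xG$ has no words beginning and ending in $y$; second (Lemma \ref{lem:refo2:KV1}) the depth-$r$ components of that anti-palindromicity are matched against the swap of the senary relation by explicit $\vimo$-computations and a change of variables. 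Neither step appears in your proposal, and there is no indication of how the cyclic/reversal structure you invoke would reproduce the specific correction terms $\frac{1}{z_r}\{\vimo^{r}(\dots,z_{r-1}+z_r)-\vimo^{r}(\dots,z_{r-1})\}$ that encode $\teru$; the degree hypothesis $w>2$ enters through the first lemma (via the uniqueness of $G$ and the cited result of Schneps), not through a degeneration of mould operators at low depth as you suggest. To repair the proposal you would need to (i) start from \eqref{KV1} rather than specialness, and (ii) actually prove the depth-by-depth identification, for instance along the two lemmas above.
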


%For our convenience, %we give a stream-lined proof  of the statement 
%we reproduce its proof  
%to be self-contained in \S \ref{sec:proof of theorem}. 
In \S \ref{sec:proof of theorem},
we extract a direct proof of this statement
from the arguments of \cite{S}.
As a corollary of  Theorem \ref{Schneps Theorem 1.1} and Proposition \ref{prop: Schneps Appendix}, the following inclusion is immediately obtained:

\begin{cor}[\cite{S}]
The map \eqref{eq:tilde f to F}
%$\tilde f \mapsto d_F$ 
gives an inclusion
\begin{equation}\label{eq:dmr cap sder to krv}
%\grt\hookrightarrow  \ 
\dmr\cap \nu^{-1}(\sder) \ \hookrightarrow \krv.
\end{equation}
%Here $\sder$ is  identified with its inverse image under the map \eqref{eq:tilde f to F}.
\end{cor}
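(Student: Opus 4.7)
The plan is to deduce the corollary as an immediate consequence of Theorem \ref{Schneps Theorem 1.1} and Proposition \ref{prop: Schneps Appendix}, once one confirms that the argument of the theorem operates element-by-element in $\tilde f$ rather than requiring the senary relation globally on all of $\dmr$.

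First, I would fix an arbitrary element $\tilde f \in \dmr \cap \nu^{-1}(\sder)$, so that by hypothesis $d_F = \nu(\tilde f)$ lies in $\sder$. For $\tilde f$ of degree $>2$, Proposition \ref{prop: Schneps Appendix} immediately yields the senary relation \eqref{senary relation} for the mould $M = \ma_{\tilde f}$ and every $r \geqslant 1$. The low degrees $\leqslant 2$ must be handled separately; there $\dmr$ is sufficiently small that either the intersection with $\nu^{-1}(\sder)$ vanishes or the senary relation can be verified by a direct computation.

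With the senary relation in hand for this specific $\tilde f$, I would invoke the argument underlying Theorem \ref{Schneps Theorem 1.1} (which the authors extract in \S \ref{sec:proof of theorem}) pointwise, concluding $\nu(\tilde f) \in \krv$. Since $\sder \subset \der$ is a Lie subalgebra and $\nu$ is a Lie algebra homomorphism, the subset $\dmr \cap \nu^{-1}(\sder)$ is itself a Lie subalgebra of $\dmr$, so the restriction of $\nu$ is automatically a Lie algebra morphism into $\krv$. Injectivity is inherited from the injectivity of $\nu$ on $\mathbb L$, which is a structural property of the assignment $\tilde f \mapsto d_F$ independent of the senary hypothesis.

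The principal obstacle, if any, is confirming that the proof of Theorem \ref{Schneps Theorem 1.1} truly proceeds pointwise in $\tilde f$, so that the conclusion $\nu(\tilde f) \in \krv$ depends only on the senary relation holding for that single element. Given that the authors promise in \S \ref{sec:proof of theorem} to extract a direct proof of Proposition \ref{prop: Schneps Appendix} from the arguments of \cite{S}, a parallel element-wise reading of Theorem \ref{Schneps Theorem 1.1} should make the argument transparent and reduce the corollary to a formal combination of the two preceding statements.
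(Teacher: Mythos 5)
Your proposal is correct and takes essentially the same route as the paper, which obtains the corollary immediately by combining Theorem \ref{Schneps Theorem 1.1} with Proposition \ref{prop: Schneps Appendix}, i.e.\ precisely the element-wise reading you spell out (Proposition \ref{prop: Schneps Appendix} supplies the senary relation for the given $\tilde f\in\dmr\cap\nu^{-1}(\sder)$, and the argument of Theorem \ref{Schneps Theorem 1.1} then places $\nu(\tilde f)$ in $\krv$). Your added remarks on the low-degree cases and on $\dmr\cap\nu^{-1}(\sder)$ being a Lie subalgebra only make explicit what the paper leaves implicit.
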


%\newtheorem{rethm}{Theorem \ref{Schneps Theorem 1.1}${}^{\prime}$\!\!}
%\renewcommand{\therethm}{}
%\begin{rethm}%\label{thm:main}
%{\it
%Let $\tilde f$ be a Lie element in $\mathbb L$ with degree $>2$.
%Put $M=\ma_{\tilde f}$.
%
%(i).
%Saying the senary relation  \eqref{senary relation} 
%%holds %for a  mould $M=\ma_{\tilde f}$
%for all $r\geqslant 1$ 
%is equivalent to saying
%$d_F\in\sder$ (cf. Definition \ref{def:der and sder}).
%
%(ii).
%%Without assuming the senary relation  \eqref{senary relation},
%The map \eqref{eq:tilde f to F}
%%$\tilde f \mapsto d_F$ 
%gives an inclusion
%\begin{equation}\label{eq:dmr cap sder to krv}
%%\grt\hookrightarrow  \ 
%\dmr\cap\sder \ \hookrightarrow \krv.
%\end{equation}
%Here $\sder$ is  identified with its inverse image under the map
%\eqref{eq:tilde f to F}.
%}
%%(ii). There is an equality
%%\begin{equation}
%%\grt=\dmr\cap \vsder=\krv\cap \vsder.
%%\end{equation}
%%\end{thm}
%\end{rethm}

%Our second result is to
In this paper we check 
the validity of  the assumption  \eqref{senary relation}
in small $r$: 

\begin{thm}\label{thm:senary check}
The senary relation  \eqref{senary relation}
holds for any mould $M=\ma_{\tilde f}$ with $\tilde f\in\dmr$
when $r=1,2$ and $3$.
\end{thm}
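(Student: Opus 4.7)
The plan is to verify Theorem \ref{thm:senary check} by directly computing both sides of \eqref{senary relation} in each of the three depths $r=1,2,3$ separately. Since both sides are depth-$r$ components of moulds, each identity amounts to an equality of functions in $r$ variables $u_1,\ldots,u_r$. The strategy is to unfold the definitions of $\teru$, $\push$ and $\mantar$ recalled in \S\ref{sec:mould theory}, substitute $M=\ma_{\tilde f}$, and then exploit the defining relations of $\dmr$ satisfied by $\tilde f$ to match the two sides. Proposition \ref{prop: Schneps Appendix} will not be used directly, since the equivalence it furnishes is global in $r$; however, it provides a useful conceptual check on the low-depth computations, as $d_F\in\sder$ should be visible in depth $\leqslant r$ from the depth-$r$ senary identity.

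In depth $r=1$ the operations collapse drastically: $\mantar$ acts as multiplication by $-1$ and $\push$ amounts to a single sign change in the unique variable, so that after substituting $M=\ma_{\tilde f}$ the senary identity reduces to a one-variable symmetry of $\ma_{\tilde f}$ which is automatic from the definition of the map $\ma$, without recourse to the $\dmr$-relations. In depth $r=2$, expansion of the composition $\push\circ\mantar\circ\teru\circ\mantar$ produces a sum of terms indexed by the reflections and cyclic shifts of $(u_1,u_2)$. Translating through $\ma$ turns each term into a pairing of $\tilde f$ with an explicit noncommutative monomial, so the senary identity reduces to a finite linear relation among the depth-$2$ coefficients of $\tilde f$. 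This relation is then established using the shuffle and stuffle components of the regularized double shuffle relations, which in depth $2$ remain explicit and short.

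The $r=3$ case will be the main obstacle and will carry the bulk of the work. Here the senary identity expands into a genuinely non-trivial linear relation among many monomial coefficients of $\tilde f$, arising from the permutations of $\{u_1,u_2,u_3\}$ produced by the compositions of $\teru$, $\mantar$ and $\push$. I plan to bring the identity into a standardized form by reducing every term to the value of $\ma_{\tilde f}$ on a prescribed collection of cyclic sectors, and then to verify the vanishing of the resulting linear combination by invoking the depth-$3$ shuffle and stuffle relations defining $\dmr$. The delicate point is the combinatorial bookkeeping: one must organize the terms according to the six (hence \emph{senary}) ways of cyclically and reflectively permuting three variables so that the cancellations coming from shuffle and stuffle become manifest. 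I expect that this reduction will require either a systematic hand-computation laid out in a table or a short computer-assisted check, but no new ideas beyond the careful use of the $\dmr$-relations.
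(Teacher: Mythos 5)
Your submission is a programme rather than a proof: the entire content of Theorem \ref{thm:senary check} is precisely the low-depth verification, and for $r=3$ (and in fact already for $r=2$) you only announce that a ``systematic hand-computation laid out in a table or a short computer-assisted check'' will work, without exhibiting it. Moreover the one case you do claim to settle, $r=1$, is not handled correctly: unwinding $\teru$, $\mantar$ and $\push$ in depth $1$, the senary relation reads $M^1(u_1)=M^1(-u_1)$, and since $\ma^1_{\tilde f}(u_1)=c_{yx^{w-1}}(\tilde f)\,u_1^{w-1}$ for $\tilde f$ of weight $w$, this is automatic from the definition of $\ma$ only in odd weight; in even weight it amounts to the vanishing of the depth-one coefficient of $\tilde f$, which is a consequence of the double shuffle relations (or of the restriction to depth $\geqslant 2$ used in the paper), not of the definition of $\ma$. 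So the assertion that $r=1$ needs ``no recourse to the $\dmr$-relations'' is false as stated (the sign you attribute to $\mantar$ in depth $1$ is also off, though harmless since it occurs twice).

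Two concrete difficulties, which are exactly where the paper's proof spends its effort, are not addressed by your plan. First, the stuffle side of $\dmr$ translates into alternility of $\swap(M)$ only \emph{up to a constant-valued mould} $C$ (the mould avatar of the regularization term $\varphi_{\mathrm{corr}}$); in the paper's argument one must prove $C_2=0$ and track the $3C_3$-terms through the depth-three computation until they cancel, and a coefficient-level verification that ignores these corrections will not close. Second, the senary relation in depth $r$ mixes $M^r$ with $M^{r-1}$ through the difference quotients in $\teru$ (and their images under $\push$ and $\mantar$), so it is not simply ``a linear relation among the depth-$r$ coefficients of $\tilde f$''; the paper's key structural step is Lemma \ref{lem:senary relation 2}, which rewrites \eqref{senary relation} in terms of $\tswap=\h\circ\swap$ and the collision operators $\coll^{r+1}_{i,i+1}$, precisely so that alternality of $M$ (mantar-invariance in low depth) and alternility of $\swap(M)+C$ can be applied term by term as in Proposition \ref{appendix theorem}. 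Your route---expanding into monomial coefficients of $\tilde f$ and invoking the depth-$2$ and depth-$3$ regularized double shuffle relations---is equivalent in principle to this, but without an analogue of that reformulation and without the bookkeeping of the regularization constants, the promised cancellations are not established, and the proposal does not yet constitute a proof.
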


The proof is given by mould theoretical arguments.

\begin{rem}
Since Drinfeld's Grothendieck-Teichm\"{u}ller Lie algebra $\grt$ is  contained in 
$\sder$ by \cite[Proposition 5.7]{Dr}
and the inclusion from $\grt$ to $\dmr$
is constructed in \cite{F11, EF2}, 
we have a sequence of inclusions
\begin{equation}
\grt \ \hookrightarrow \quad \dmr\cap\nu^{-1}(\sder) \quad \hookrightarrow \ \krv.
\end{equation}
%is obtained
%by \eqref{eq:dmr cap sder to krv}
\end{rem}

\begin{rem}
The inclusion 
$$
\Fil^2_{\mathcal D}\mathbb D(\Gamma)_{\bullet\bullet}\hookrightarrow
\mathfrak{lkrv}(\Gamma)_{\bullet\bullet},
$$
which is regarded to be 
a bigraded variant of \eqref{eq:dmr cap sder to krv} in the case of $\Gamma=\{e\}$,
where $\Gamma$ is an  abelian group, %$\Gamma$
is constructed in \cite{FK}.
\end{rem}

The remainder of this paper proceeds as follows:
\S \ref{sec:mould theory} is a quick review on the notions of mould theory which are required in the paper.
We recall the definitions of two Lie algebras in  \S \ref{sec:2LAs}.
We give a stream-lined proof of Proposition \ref{prop: Schneps Appendix}
%The proof of Theorem {\ref{Schneps Theorem 1.1}${}^{\prime}$}
%Theorem \ref{thm:main}} 
%is given 
in \S \ref{sec:proof of theorem}.
Theorem \ref{thm:senary check}
is proved in \S \ref{sec:proof of senary check theorem}.

\subsubsection*{Acknowledgements}
H.F. has been supported by grants JSPS KAKENHI  JP18H01110, JP20H00115,
JP21H00969 and JP21H04430.
The authors would like to express their gratitude 
to L. Schneps and B. Enriquez for valuable comments and suggestions

%%%%%%%%%%%%%%%%%%%%%%%%%%%%%%%%%%%%%%%%%%%%%%%%%%%%%%%%%%%%%%%%%%%%%%%
\section{Mould theory}\label{sec:mould theory}
We briefly review several notions of  Ecalle's mould theory
%(cf. \cite{E81},  \cite{S-ARIGARI}, etc)
%which are required  in later sections
by borrowing the notations  in our previous  joint paper \cite[\S 1]{FK} 
%and the pape			r of  second-named author \cite{K} 
where moulds indexed by a finite abelian group $\Gamma$ is introduced.
We put $\mathcal{F}:=\bigcup_{m\geqslant 1}\mathbb Q[x_1,\dots,x_m].$

%The notion of moulds was invented by Ecalle (cf. \cite[Tome I. pp.12-13]{E81}).
%For our convenience we employ the following formulation
%influenced by  \cite{S-ARIGARI}
%which is different from  the one employed in \cite[D\'{e}finition 1 or D\'{e}finition II.1]{Cre} and \cite[\S 4.1]{Sau}.

%In this paper, we take 
%$\mathcal{F}:=\bigcup_{m\geqslant 1}\mathbb Q[x_1,\dots,x_m].$

\begin{defn}[\cite{E81} I, pp.12-13 and \cite{S-ARIGARI}]
\label{def:mould}
A {\it mould} on $\Z_{\geqslant0}$ with values in $\mathcal{F}$ 
means  %a collection (
a sequence%) 
\begin{equation*}
	M=(M^m(x_1, \dots , x_m))_{m\in\Z_{\geqslant0}}=\bigl( M^0(\emptyset), M^1(x_1), M^2(x_1, x_2), \dots \bigr),
\end{equation*}
with $M^0(\emptyset)\in\mathbb Q$ and $M^m(x_1,\dots,x_m)$
$\in\mathbb Q[x_1,\dots,x_m]$ for $m\geqslant 1$. 
The set of all moulds with values in $\mathcal{F}$ is denoted by $\mathcal{M}(\mathcal{F})$. 
The set $\mathcal M(\mathcal F)$ 
%is a $\mathbb Q$-linear space by
forms a non-commutative, associative, unital $\mathbb Q$-algebra by
\begin{align*}
	M + N
	&:= (M^m(x_1, \dots , x_m)+ N^m(x_1, \dots , x_m))_{m\in\Z_{\geqslant0}}, \\
	c M
	&:= (c M^m(x_1, \dots , x_m))_{m\in\Z_{\geqslant0}}, \\
%\end{align*}
%for $A, B\in\mathcal M(\mathcal F)$ and $c\in \mathbb Q$. 
%We define a product on $\mathcal M(\mathcal F)$ by
%\begin{equation*}
	 M \times N &
	:=\left(\sum_{i=0}^m M^i(x_1, \dots , x_i) N^{m-i}(x_{i+1}, \dots , x_m)\right)_{m\in\Z_{\geqslant0}}
%\end{equation*}
\end{align*}
for $M, N\in\mathcal M(\mathcal F)$ and $c\in \mathbb Q$. 
%for $A,B\in\mathcal M(\mathcal F)$. 
%Then the pair $(\mathcal M(\mathcal F),\times)$ is a non-commutative, associative, unital $\mathbb Q$-algebra. 
%Here, the 
Its unit $ I\in\mathcal M(\mathcal F)$ is given by 
$ I:=(1,0,0,\dots)$.

A mould $C=( C^m(x_1,\dots, x_m) )_{m\in\Z_{\geqslant0}}\in\mathcal M(\mathcal F)$ 
is called  \textit{constant} when we have $C^m(x_1,\dots, x_m)\in\Q$ for all $m\in\Z_{\geqslant0}$'s.
The unit $I$ is  a constant mould.
For our simplicity, we denote each component $C^m(x_1,\dots,x_m)$ of a constant mould $C$ by $C_m$.
\end{defn}

We equip $\mathcal M(\mathcal F)$ with the depth filtration
$\{\Fil^m_{\D}\mathcal M(\mathcal F)\}_{m\geqslant 0}$
where
$\Fil^m_{\D}\mathcal M(\mathcal F)$ consists of moulds
with $ M^r(x_1,\dots,x_r)=0$ for $r<m$.
The algebraic structure of $\mathcal M(\mathcal F)$ is compatible with the depth filtration and
the subspace
$$
\ARI
:=\{ M \in \mathcal M(\mathcal F) \ |\  M^0(\emptyset)=0 \}.
$$ 
forms a filtered (non-unital) subalgebra.

We call a mould $ M\in \mathcal M(\mathcal F)$ {\it finite} when
$ M^m(x_1, \dots , x_m)=0$ except for finitely many $m$'s.
%for almost all $m$
%It is called {\it polynomial-valued}
%when $ M^m(x_1,\dots, x_m)\in\mathbb Q[x_1,\dots,x_m]$ for all $m$.
We put the upper letters "$\fin$" 
as $\mathcal M(\mathcal F)^{\fin}$, $\ARI^{\fin}$, etc.
to indicate the finite part.
%to be %the subset of $\ARI_\al$
%the set of all finite %polynomial-valued (resp. alternal) 
%moulds.

For our later use, we  prepare several operations on $\mathcal M(\mathcal F)$ :
\begin{defn}[{\cite{E-flex}}]
We consider the following $\Q$-linear operators 
defined by,
for any mould $ M=( M^m(u_1,\dots, u_m))_m$ in $\mathcal{M}(\mathcal{F})$,
\begin{align*}
%&\swap( M)^m(v_1,\dots, v_m)= M^m(v_m,v_{m-1}-v_m,\dots,v_1-v_2), \\
&\swap( M)^m(v_1,\dots, v_m)= M^m(v_m,v_{m-1}-v_m,\dots,v_1-v_2), \\
%&\mantar( M)^m(u_1,\dots,u_m)=(-1)^{m-1} M^m(u_m,\dots,u_1),\\
%&\push( M)^m(u_1,\dots,u_m)= M^m(-u_1-\cdots-u_m, u_1,\dots,u_{m-1}),\\
&\pus( M)^m(u_1,\dots,u_m)= M^m(u_m,u_1,\dots,u_{m-1}),\\
%&\nega( M)^m(u_1,\dots,u_m)= M^m(-u_1,\dots,-u_m),\\
&\teru( M)^m(u_1,\dots,u_m)= M^m(u_1,\dots,u_m) \\
&\qquad+\frac{1}{u_m}\left\{ M^{m-1}(u_1,\dots,u_{m-2},u_{m-1}+u_m)- M^{m-1}(u_1,\dots,u_{m-2},u_{m-1})\right\}.
\end{align*}
\end{defn}
%Note that they are all $\mathbb Q$-linear endomorphisms on $\mathcal{M}(\mathcal{F})$.
We remark that $\swap\circ\swap=\id$. % and $\mantar\circ\mantar=\id$.

\begin{defn}%[\cite{E-flex} \S 2.5 ??]
\label{def:ARIpushpusnu}
(1).
A mould $ M\in\mathcal{M}(\mathcal{F})$ %is {\it push-invariant} 
%(\cite{E-flex} \Add{There is no definition of push-invariant in any page!!})
%when we have
%\begin{equation}\label{push-invariant}
%	\push( M)= M,
%\end{equation}
%and we call  
is called
{\it pus-neutral} (\cite[(2.73)]{E-flex}) if
%\footnote{
%It is not push-neutral but pus-neutral.
%}
%when we have
\begin{equation*}\label{pus-neutral}
	\sum_{i=1}^{m}\pus^i( M)^m(u_1,\dots,u_m)\left(=\sum_{i\in\Z/m\Z} M^m(u_{1+i},\dots,u_{m+i})\right)=0
\end{equation*}
holds for all $m\geqslant1$.
%We define $\ARI_{\push}$ (\cite[\S 2.5]{E-flex})
%to be the set of moulds $ M$ in $\ARI$
%which is push-invariant \eqref{push-invariant} %moulds %$ M$ satisfying \eqref{push-invariant} 
%and define  $\ARI_{\push/ \pusnu}$
%\footnote{
%We expect that our $\ARI_{\push/ \pusnu}$
%might be related to Ecalle's $\ARI^{\ast/\overline{\pusnu}}$
%(\cite[\S 2.5]{E-flex}),
%whose precise definition looks missing.
%}
%to be its subset 
%consisting of moulds $ M$ %in $\ARI_{\mathrm{push}}$
%whose $\swap ( M)$ is pus-neutral  \eqref{pus-neutral}.
%%satisfying
%%\eqref{push-invariant} and  \eqref{reform:DKV2}.

(2). The $\mathbb Q$-linear space
%$\ARI_{\KVi\ast \KVii}$ to be the set of moulds $ M$
$\ARI_{\pspush/\pusnu}$ is
%\footnote{
%Here $\pspush$ stands for the pseudo-push invariance, that is,
%Ecalle's senary relation
%\eqref{senary relation} because it becomes the push-invaraince
% \eqref{push-invariant} 
%if we neglect $\teru$ in \eqref{senary relation}.
%%when $\mantar( M)$ is $\teru$-invariant.
%}
%to be 
the subset of $\ARI$ consisting of
all moulds $ M$ %in $\ARI$
%which 
satisfying Ecalle's senary relation \eqref{senary relation} 
for all $r\geqslant1$
and the pus-neutrality for $\swap(M)$.
%and whose $\swap$
%are pus-neutral. %\eqref{pus-neutral}
\end{defn}

\bigskip 
To explain  the notion of the alternality of moulds,
we prepare some algebraic formulation:
Set $X:=\{x_i\}_{i\in\N}$.
We denote by $X_{\Z}$ the free $\Z$-module spanned by all elements of $X$.
Let $X_{\Z}^\bullet$ be
the non-commutative free monoid generated by all elements of $X_{\Z}$
with the empty word $\emptyset$ as the unit. 
We sometimes write each element $\omega=u_1 \cdots u_m \in X_{\Z}^\bullet$
with $u_1, \dots ,u_m\in X_{\Z}$ by
%\begin{equation*}
$\omega=(u_1, \dots ,u_m)$
%\end{equation*}
as a sequence.
We put $l(\omega):=m$ and call this the {\it length} of $\omega=u_1 \cdots u_m$.

%\begin{rem}
	For our simplicity, we  occasionally denote 
	\begin{equation*}
		 M=( M^m(\vecx_m)  )_{m\in \Z_{\geqslant0}} \quad \textrm{or}\quad
		 M=( M(\vecx_m) )_{m \in \Z_{\geqslant0}},
	\end{equation*}
	with $\vecx_0:=\emptyset$ and $\vecx_m:=(x_1, \dots ,x_m)$ for $m\geqslant1$. Under the notations,  the product of $ M, N\in\mathcal M(\mathcal F)$ is presented as
	\begin{equation*}
		 M\times N
	:=\left(
	\sum_{\substack{
		\vecx_m=\alpha\beta \\
		}}M^{l(\alpha)}(\alpha)N^{l(\beta)}(\beta)
	\right)_{m\in\Z_{\geqslant0}}
	\end{equation*}
	where $\alpha$ and $\beta$ run over  $X_{\Z}$.
%\end{rem}

We set
$\mathcal A:=\mathbb Q \langle X_{\Z} \rangle$,
the non-commutative polynomial algebra generated by 
$X_{\Z}$.
So $\mathcal A$ is the $\mathbb Q$-linear space generated by $X_{\Z}^{\bullet}$.
\footnote{
We should beware of the inequality $1\cdot (-x_1)\neq- (1\cdot x_1)$.
} 
We equip $\mathcal A$ with a product $\shuffle : \mathcal A\otimes \mathcal A \rightarrow \mathcal A$ 
which is the linear map defined by $\emptyset\, \shuffle\, \omega:=\omega\, \shuffle\, \emptyset:=w$ and
\begin{equation}
	u\omega\, \shuffle\, v\eta:=u(\omega\, \shuffle\, v\eta)+v(u\omega\, \shuffle\, \eta),
\end{equation}
for $u, v \in X_{\Z}$
and $\omega, \eta \in X_{\Z}^\bullet$.
Then the pair $(\mathcal A,\shuffle)$ is a commutative, associative, unital $\mathbb Q$-algebra.

We consider the family $\left\{\Sh{\omega}{\eta}{\alpha}\right\}_{\omega,\eta,\alpha\in X_{\Z}^\bullet}$ in $\Z$ defined by
\begin{equation*}
	\omega\ \shuffle\ \eta
	=\sum_{\alpha\in X_{\Z}^\bullet}
	\Sh{\omega}{\eta}{\alpha}\alpha.
\end{equation*}
In particular, for $p,q\in\N$ and $u_1,\dots,u_{p+q}\in X_{\Z}$, 
the shuffle product is rewritten as
\begin{equation*}
	(u_1,\dots,u_p)\ \shuffle\ (u_{p+1},\dots,u_{p+q})=\sum_{\sigma\in\Sha_{p,q}}(u_{\sigma(1)},\dots,u_{\sigma(p)},u_{\sigma(p+1)},\dots,u_{\sigma(p+q)}),
\end{equation*}
where the set $\Sha_{p,q}$ stands for 
\begin{equation}\label{shuffle permutation}
	\{\sigma\in S_{p+q}
	\ |\ \sigma^{-1}(1)<\cdots<\sigma^{-1}(p),\ \sigma^{-1}(p+1)<\cdots<\sigma^{-1}(p+q)\},
\end{equation}
and $S_{p+q}$ means the symmetry group with degree $p+q$.

%\begin{rem}
%\Add{
Let us assume that $u_1,\dots,u_m\in X_{\Z}$ are algebraically independent over $\mathbb Q$ in $\mathcal F$.
For $ M\in\mathcal M(\mathcal F)$,
%	\begin{equation*}
$		 M^m(u_1,\dots,u_m)$
stands for the image of $M^m(x_1,\dots, x_m)$ under the embedding
$\mathbb Q[x_1,\dots,x_m]\hookrightarrow \mathcal F$
which sends $x_i\mapsto u_i$.
%:= M^m(x_1,\dots,x_m)|_{x_i=u_i}.$
%	\end{equation*}
%	}
%\end{rem}

\begin{defn}[\cite{E81} I--p.118]
A mould $ M$ is  {\it alternal} when $ M(\emptyset)=0$ and
\begin{equation}\label{eq:alternal}
\sum_{\alpha\in X_{\Z}^\bullet}\Sh{(x_1,\dots,x_p)}{(x_{p+1},\dots,x_{p+q})}{\alpha} M^{p+q}(\alpha)=0,
\end{equation}
holds for all $p,q\geqslant1$.
\end{defn}

%For any mould $ M=( M^m(u_1,\dots, u_m))_m$, we put
%\footnote{
%The reason why we use the parameter $v_i$ instead of $u_i$ is that
%in the context of bimoulds (\cite{E-ARIGARI} and \cite{E-flex})
%$\swap$ is regarded to be an operator from $u$-moulds to $v$-moulds
%(cf. \cite[\S 2.4]{S-ARIGARI}).
%}
%\begin{equation}\label{eq:swap}
%\swap( M)^m(v_1,\dots, v_m)= M^m(v_m,v_{m-1}-v_m,\dots,v_1-v_2).
%\end{equation}

\begin{defn}[\cite{E-ARIGARI,E-flex}]\label{def:ARIal}
The $\mathbb Q$-linear space 
$\ARI_\al$
is defined to be the subset of all alternal moulds in $\ARI$.
%We denote the subset $\ARI_{\al/\al}$ of $\ARI_\al$ to be the $\mathbb Q$-linear space which consists of alternal moulds whose $\swap$ is also alternal. 
%Its subset %$\mathbb Q$-linear subspace 
%$\ARI_{\underline\al/\underline\al}$
%%of $\ARI_{\al}$ 
%is defined to be
%%two subsets $\ARI_{\underline\al/\underline\al}$ and $\Fil_{\D}^{2}\ARI_{\al/\al}$ of $\ARI_{\al/\al}$ by
%\begin{align*}
%	\ARI_{\underline\al/\underline\al}:=&\{M\in\ARI_\al \ |\
%	\swap(M)\in\ARI_\al, \  M^1(x_1)=M^1(-x_1)\}.%, \\
%	%\Fil_{\D}^{2}\ARI_{\al/\al}:=&\{M\in\ARI_{\al/\al} \ |\ M^1(x_1)=0\}. 
%\end{align*}
\end{defn}
%There is a sequence  $\ARI_{\underline\al/\underline\al}\subset \ARI_{\al/\al}\subset\ARI_\al$.
%We encode them %both $\ARI_\al$ and $\ARI_{\al/\al}$
% with the induced depth filtrations and 
 It  forms a filtered $\mathbb Q$-algebra under the filtrations induced from the depth filtration of $\ARI$.

\begin{rem}\label{rem: ari-bracket}
The  {\it ari-bracket} is introduced 
in \cite[\S 2.2]{E-flex} (cf. \cite[\S 2.2]{S-ARIGARI}) and
it is known that
$\ARI$ and $\ARI_\al$ %and  $\ARI_{\underline\al/\underline\al}$ 
form  Lie algebras 
under the bracket
 (see \cite{FK} for a full detailed proof).
\end{rem}

\bigskip 
To present  the notion of the alternility of moulds,
we prepare some algebraic formulation:
Set $Y:=\{y_i\}_{i\in\N}$.  Let $\mathcal Y:=\mathbb Q(Y)$ be the commutative field generated by all elements of $Y$ over $\mathbb Q$. 
We denote $\mathcal A_{\mathcal Y}:=\mathcal Y\langle X_{\Z}\rangle$ to be the non-commutative polynomial algebra generated by all elements of
$X_{\Z}$.
So $\mathcal A_{\mathcal Y}$ is the same as the tensor product $\mathcal A\otimes_{\mathbb Q} \mathcal Y$
with $\mathcal A=\mathbb Q\langle X_{\Z}\rangle$. %in \S \ref{subsec:moulds}).
It is  equipped with a product $\hshuffle:\mathcal A_{\mathcal Y}^{\otimes 2}\rightarrow\mathcal A_{\mathcal Y}$ which is the $\mathcal Y$-linear map defined by $\emptyset\ \hshuffle\ \omega:=\omega\ \hshuffle\ \emptyset:=\omega$ and 
\begin{equation}\label{shuffle star definition}
	u\omega\ \hshuffle\ v\eta
	:=u(\omega\ \hshuffle\ v\eta)+v(u\omega\ \hshuffle\ \eta)
	+f(u-v)\{u(\omega\hshuffle\eta)-v(\omega\hshuffle\eta)\},
\end{equation}
for $u,v\in X_{\Z}$
and $\omega,\eta\in X_{\Z}^\bullet$. Here, we define the above map $f:X_{\Z}\rightarrow\mathcal Y$ by $f(0):=0$ and
\begin{equation*}
	f(n_1x_{i_1}+\cdots+n_kx_{i_k}):=\frac{1}{n_1y_{i_1}+\cdots+n_ky_{i_k}},
\end{equation*}
for $k\geqslant1$ and for $i_1,\dots,i_k\in\N$ and for $n_1,\dots,n_k\in\Z$ with $n_j\neq0$ for a certain $1\leqslant j\leqslant k$. Then the pair $(\mathcal A_{\mathcal Y},\hshuffle)$ is a commutative, {\it non}-associative, unital $\mathcal Y$-algebra.

We consider the family $\left\{ \hSh{\omega}{\eta}{\alpha} \right\}_{\omega,\eta,\alpha\in X_{\Z}^\bullet}$ in $\mathcal Y$ defined by
\begin{equation*}
	\omega\ \hshuffle\ \eta
	=\sum_{\alpha\in X_{\Z}^\bullet}
	\hSh{\omega}{\eta}{\alpha}\alpha.
\end{equation*}

\begin{defn}[\cite{E-ARIGARI,E-flex, SS}]
\label{ARIalil}
(1). A mould $ M$ is {\it alternil} when $ M(\emptyset)=0$ and
\begin{equation}\label{eq:alternil}
\sum_{\alpha\in X_{\Z}^\bullet}\left.\hSh{(x_1,\dots,x_p)}{(x_{p+1},\dots,x_{p+q})}{\alpha}\right|_{y_i=x_i} M^{l(\alpha)}(\alpha)=0,
\end{equation}
for all $p,q\geqslant1$.
%The $\mathbb Q$-linear space $\ARI_\il$ is defined to be the subset of moulds $ M\in\mathcal M(\mathcal F)$  which are alternil.
%\end{defn}

%\begin{defn}[\cite{E-ARIGARI}, \cite{E-flex}]\label{def:ARIalal}
(2). 
The subset $\ARI_{\underline\al\ast\underline\il}$ 
is the $\Q$-linear space spanned by all alternal
moulds $M$ in $\ARI$ such that  $\swap(M)$ is alternil up to a constant-valued mould,
(that is, there exist a constant mould $C\in\mathcal M(\mathcal F)$ so that $\swap(M)+C$ is alternil).
%is defined to be
%
%\begin{align*}
%	\ARI_{\underline\al\ast\underline\il}:=&\{M\in\ARI_\al \ |\
%	\swap(M) \text{ is alternil}, \  M^1(x_1)=M^1(-x_1)\}. 
%\end{align*}
\end{defn}

\begin{rem}\label{rem:SaSch}
Under the $\ari$-bracket (cf. Remark \ref{rem: ari-bracket}),
the $\mathbb Q$-linear space 
$\ARI_{\underline\al\ast\underline\il}$
forms a filtered Lie algebra 
(cf. 
%\cite[Theorem 4.6.1]{S-ARIGARI},
%\cite{SS}
%\cite{K}
\cite{K, S-ARIGARI,SS}).
%The map sending $\tilde f\mapsto \ma_{\tilde f}$ realizes a 
The Lie algebra isomorphism between $\dmr$ and $\ARI_{\underline\al\ast\underline\il}$
is realized by the map sending $\tilde f\mapsto \ma_{\tilde f}$
(cf. \cite{SS, S-ARIGARI}).
\end{rem}

Let $\mathbb A=\mathbb Q\langle x,y\rangle$  be
the non-commutative polynomial algebra with two variables $x$ and $y$.
%$\mathbb A=\mathbb Q\langle x,y\rangle$ is regarded as 
%the universal enveloping algebra of $\mathbb L$ and is encoded with the induced degree. 
%Similarly 
This $\mathbb A$ forms a bigraded algebra;
%$\mathbb A=\oplus_w \mathbb A_w$ and $\mathbb A_w=\oplus_d\mathbb A_{w,d}$,
$\mathbb A=\oplus_{w,d}\mathbb A_{w,d}$,
where $\mathbb A_{w,d}$ means the $\mathbb Q$-linear space generated by all monomials with {\it weight} (the total degree) $w$ and {\it depth} (the degree with respect to $y$) $d$.
Put  $\mathbb A_w=\oplus_d\mathbb A_{w,d}$ and
$\Fil_{\D}^d \mathbb A_w:=\oplus_{N\geqslant d}\mathbb A_{w,N}$
for $d>0$.
Then $\mathbb A$ is a filtered  graded algebra
by the depth filtration $\Fil_{\D}^d \mathbb A=\oplus_w\Fil_{\D}^d \mathbb A_w$.
%We start with a  review on the map $\ma$ (cf. \cite[Ch.3]{S-ARIGARI}).

\begin{defn}[{\cite[Appendix A]{S}}]
Let $h\in\mathbb A_w$ be a degree $w$ homogeneous polynomial
with
$$
h=\sum_{r=0}^wh^r \qquad \text{and}\qquad 
h^r= \sum_{\mathbf{e}=(e_0,\dots,e_r)\in\mathcal{E}_w^r} a(h)_{e_0,\dots,e_r}x^{e_0}y\cdots yx^{e_r}\in
\mathbb A_{w,r}
$$
where $\mathcal{E}_w^r=\{\mathbf{e}=(e_0,\dots,e_r)\in\N_0^{r+1} \mid \sum_{i=0}^r e_i=w-r \}$.
%As is explained in 
With such $h\in\mathbb A$,
the associated  mould 
$$\ma_h=(\ma^0_h,\ma^1_h,\ma^2_h,\dots,\ma^w_h,0,0,\dots) \in\mathcal M(\mathcal F)$$
is defined by $\ma^0_h=0$ and 
\begin{align*}
&\ma^r_h=\ma^r_h(u_1,\dots,u_r)
=\vimo^r_h(0,u_1,u_1+u_2,\dots,u_1+\cdots+u_r), \\
&\vimo^r_h(z_0,\dots,z_r)
=\sum_{(e_0,\dots,e_r)\in\mathcal{E}_w^r}
a(h)_{e_0,\dots,e_r}
z_0^{e_0}z_1^{e_1}z_2^{e_2}\cdots z_r^{e_r}.
\end{align*}
\end{defn}

The map $h\mapsto\ma_h$
yields a filtered algebra homomorphism 
$$\ma:\mathbb A\to \mathcal {M}(\mathcal F)^\fin . $$
Various properties of the map are investigated in \cite{FHK}.

%%%%%%%%%%%%%%%%%%%%%%%%%%%%%%%%%%%%%%%%%%%%%%%%%%%%%%%%%%%%%%%%%%%%%%%
\section{Kashiwara-Vergne and double shuffle Lie algebra}\label{sec:2LAs}
We recall the definition of 
Kashiwara-Vergne Lie algebra $\krv$
introduced in \cite{AET} and \cite{AT}
and also that of the double shuffle Lie algebra  $\dmr$
introduced in \cite{R}.
Then we explain their mould theoretic interpretations.
%\Add{What to do for depth filtration? }

%\subsection{Kashiwara-Vergne Lie algebra}
Let $\mathbb L=\oplus_{w\geqslant 1}\mathbb L_w
=\oplus_{w,d}\mathbb L_{w,d}$ 
be the subspace of $\mathbb A$ consisting of all Lie polynomials.
It is
a free graded Lie $\mathbb Q$-algebra with two variables $x$ and $y$ with $\deg x=\deg y=1$.
%which is regarded as the subspace of $\mathbb A$.
%Here $\mathbb L_w$ is the $\mathbb Q$-linear space generated by Lie monomials whose total degree is $w$.
%Occasionally we regard $\mathbb L$ as a bigraded Lie algebra 
%$\mathbb L_{\bullet\bullet}=\oplus_{w,d}\mathbb L_{w,d}$,
%where $\mathbb L_{w,d}$ is the $\mathbb Q$-linear space generated by Lie monomials whose {\it weight} (the total degree) is $w$ and {\it depth} (the degree with respect to $y$) is $d$.
We equip $\mathbb L$ with  %a structure of filtered graded  Lie algebra
%by 
the depth filtration
$\Fil_{\D}^d \mathbb L=\oplus_w\Fil_{\D}^d \mathbb L_w$ by
$\Fil_{\D}^d \mathbb L_w:=\oplus_{N\geqslant d}\mathbb L_{w,N}$ for $d>0$.
%and denote the associated bigraded Lie algebra by 
%$\gr_{\D}\mathbb L=\oplus_{w,d}\gr^d_{\D}\mathbb L_w$
%with $\gr_{\D}^d\mathbb L_w=
%\Fil_{\D}^d \mathbb L_w/\Fil_{\D}^{d+1} \mathbb L_w$.

%The non-commutative polynomial algebra $\mathbb A=\mathbb Q\langle x,y\rangle$ is regarded as 
%the universal enveloping algebra of $\mathbb L$ and is encoded with the induced degree. 
%Similarly $\mathbb A$ is encoded with a structure of bigraded algebra;
%$\mathbb A_{\bullet\bullet}=\oplus_{w,d}\mathbb A_{w,d}$.
%By putting $\Fil_{\D}^d \mathbb A_w:=\oplus_{N\geqslant d}\mathbb A_{w,N}$
%for $d>0$,
%we also encode $\mathbb A$ with  a structure of filtered  graded algebra.

\begin{defn}\label{def:der and sder}
(1).
We denote by $\der$ the set of derivations of $\mathbb L$ and
by $\tder$ its subset of {\it tangential derivations} which are
the derivations  $D_{F,G}$ of $\mathbb L$
such that $D_{F,G}(x)= [x,G]$ and $D_{F,G}(y)= [y,F]$ for some $F,G\in \mathbb L$.
It forms a Lie algebra under the bracket given by
\begin{equation*}\label{bracket}
%D_{F_3,G_3}=
[D_{F_1,G_1},D_{F_2,G_2}]
=D_{F_1,G_1}\circ D_{F_2,G_2}-D_{F_2,G_2}\circ D_{F_1,G_1}.
\end{equation*}

(2).
We define by $\sder$ the set of {\it special derivations},
which are tangential derivations such that 
\begin{equation}\label{eq:special derivation}
D(z)=0
\quad\text{  with}\quad z:=-x-y.
\end{equation}

%(iii).
%Its subset $\vsder$ is defined to the set of
%{\it very special derivations}, which are special derivations
% $D=D_{F,G}$ whose $(F,G)$ is given by  $(f(z,y),f(z,x))$
% for some $f=f(x,y)\in\mathbb L$ and $z:=-x-y$.
\end{defn}

The subspace $\sder$ %and $\vsder$
is a Lie subalgebra of $\tder$ and there is a sequence of Lie algebras
$$
%\vsder \subset 
\sder\subset \tder \subset \der.
$$

\begin{defn}[\cite{AT,AET}]\label{defn:krv}
%Set-theoretically
The {\it Kashiwara-Vergne Lie algebra} is the graded $\mathbb Q$-linear space $\krv=\oplus_{w> 1}\krv_w$.
\footnote{
%It was denoted by $\krv_2$ in \cite{AT}.
For our convenience we neglect the degree 1 part,
although it is taken in account in \cite{AT}.
}
Here
%Set-theoretically  
its degree $w$-part $\krv_w$  consists of Lie elements  $F\in\mathbb L_w$
%consisting of 
%such that there exists $G=G(F)$ in $\mathbb L_w$ with
satisfying
\begin{equation}\tag{KV1}\label{KV1}
[x,G]+[y,F]=0
\end{equation}
%and $\alpha\in\mathbb Q$ with
\begin{equation}\tag{KV2}\label{KV2}
\tr (G_x x+F_y y)=\alpha\cdot \tr\left((x+y)^w-x^w-y^w\right)
\end{equation}
with some  $G=G(F)\in\mathbb L_w$ and $\alpha\in\mathbb Q$, 
when we write $F=F_xx+F_yy$ and $G=G_xx+G_yy$ in $\mathbb A$.
Here $\tr$ is the trace map, the natural projection  from $\mathbb A$ 
to the $\mathbb Q$-linear space $\mathrm{Cyc}(\mathbb A)$ of cyclic words
(cf. \cite{AT}).
\end{defn}

We have $\krv_2=\{0\}$.
We note that such $G=G(F)$ is  unique if it exists when $w>1$
(for the calculations, see \cite[Theorem 1.1]{S}).
The condition \eqref{KV1} is equivalent to $D_{F,G(F)}(x+y)=0$,
i.e. $D_{F,G(F)}\in\sder$.
%to be
%$$%G=G(F):=
%s'(P)=
%\sum_{i\geqslant 0}\frac{(-1)^i}{i!}x^iy\partial_x^i(P)
%$$
%for $P=F_x$
%by the derivation $\partial_x$ of $\mathbb A$ such that $\partial_x(x)=1$ and
%$\partial_x(y)=0$ 
%as explained in \cite{S} \S 2. 
The Lie algebra structure of $\krv$ is defined to
make the embedding  
$$
\krv \subset \sder
$$
sending $F\mapsto D_{F, G(F)}$
%sending $F\in\krv_\bullet$ to $D_{F, G(F)}\in\sder$
a Lie algebra homomorphism.
%be compatible with that of $\sder$ under the embedding 
%sending $F\in\krv_\bullet$ to $D_{F, G(F)}\in\sder$
%and we regard $\krv$ to be a Lie subalgebra of $\sder$:
%$$
%\krv \subset \sder
%$$

For $d\geqslant 1$, we denote by $\Fil_{\D}^d\krv_w$ the subspace of $\krv_w$ 
consisting of $F\in \Fil_{\D}^d\mathbb L_w$ and put
$\Fil_{\D}^d\krv=\oplus_w\Fil_{\D}^d\krv_w$.
Then $\krv$ forms a filtered Lie algebra with the filtration 
$\{ \Fil_{\D}^d\krv\}_d$.

The following is a mould theoretical interpretation of $\krv$:

\begin{thm}[\cite{FK,RS}]
The map $F\in\mathbb A\mapsto\ma_{\tilde f}\in\mathcal M({\mathcal F})$ gives an isomorphism %between
of filtered $\mathbb Q$-linear spaces
$$\Fil_{\D}^{2}\krv\simeq%\Fil^2_{\D}
\ARI_{\pspush/\pusnu}\cap\ARI_\al^{\fin},$$
where
we put 
\begin{equation}\label{eq:F f tilde f}
f(x,y)=F(z,y) \quad \text{and}\quad \tilde f(x,y)=f(x,-y)
\end{equation}
for $F\in\mathbb L$.
\end{thm}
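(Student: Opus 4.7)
The plan is to analyse the map $F\mapsto\ma_{\tilde f}$ as the composition of the inclusion $\krv\hookrightarrow\mathbb L$, the $\mathbb Q$-linear automorphism of $\mathbb L$ induced by the change of free generators $(x,y)\mapsto(-x+y,-y)$ (equivalently, $F\mapsto F(z,\cdot)\mapsto F(z,-\cdot)=\tilde f$), and the mould-assignment map $\ma\colon \mathbb L\to \mathcal M(\mathcal F)^\fin$. I would then decompose the claim into three separate equivalences matching the three defining features of the target $\ARI_{\pspush/\pusnu}\cap\ARI_\al^\fin$: (i) alternality of $\ma_{\tilde f}$, (ii) the senary relation for $\ma_{\tilde f}$, and (iii) pus-neutrality of $\swap(\ma_{\tilde f})$. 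Finiteness of the mould is automatic since one may work weight by weight.

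For (i) I would invoke the standard compatibility of $\ma$ with the Lie/alternal dichotomy: the image of $\mathbb L$ under $\ma$ lies in $\ARI_\al^\fin$, and conversely every alternal finite mould is $\ma_h$ for some Lie polynomial $h$. This is part of the systematic study of $\ma$ carried out in \cite{FHK} and yields both the well-definedness of our map into the alternal side and, ultimately, its surjectivity onto $\ARI_\al^\fin$. For (ii), Proposition \ref{prop: Schneps Appendix} directly states that the senary relation for $\ma_{\tilde f}$ is equivalent to $d_F\in\sder$; since by the discussion following Definition \ref{defn:krv} the latter is exactly condition \eqref{KV1}, this gives the second equivalence.

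The main obstacle is (iii): translating the trace identity \eqref{KV2} into the pus-neutrality of $\swap(\ma_{\tilde f})$. I would expand $F=F_xx+F_yy\in\mathbb A$, read off $G=G(F)$ from \eqref{KV1}, and compute $\tr(G_xx+F_yy)$ as a linear combination of cyclic words in $\mathbb A$. The key structural identity is that, once one applies the two substitutions packaged in $\tilde f$ and then $\ma$ and $\swap$, the trace map on $\mathbb A$ (which identifies $u_1\cdots u_m$ with all its cyclic permutations) corresponds exactly to the symmetrisation $\sum_{i=0}^{m-1}\pus^i$ acting on $\swap(\ma_{\tilde f})$; the role of $\swap$ is precisely to convert the positional data of $x$-letters in a word into the mould variable ordering. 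The right-hand side $\alpha\cdot\tr((x+y)^w-x^w-y^w)$ of \eqref{KV2} becomes, after $\ma$ and $\swap$, a specific explicit constant-valued mould, and checking that the depth-$\geq 2$ restriction forces this scalar correction to be compatible with strict pus-neutrality is the computationally most delicate step.

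Combining the three equivalences produces the claimed $\mathbb Q$-linear bijection. Filtration compatibility between $\Fil_{\mathcal D}^2\krv$ and $\ARI_{\pspush/\pusnu}\cap\ARI_\al^\fin$ follows because pus-neutrality of $\swap(M)$ applied at $m=1$ already forces $M^1=0$, so the target automatically lies in $\Fil_{\mathcal D}^2\ARI$, matching the depth-$\geq 2$ condition on the source under the substitution. Injectivity is inherited from that of $\ma|_{\mathbb L_w}$ weight by weight, while surjectivity has been assembled from the three equivalences.
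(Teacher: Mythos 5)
Your overall decomposition (alternality $\leftrightarrow$ Lie, senary $\leftrightarrow$ \eqref{KV1}, pus-neutrality of the swap $\leftrightarrow$ \eqref{KV2}) is the same route as the references the paper actually relies on: note that this theorem is not proved in the paper at all, but imported from \cite{FK,RS}; the only piece established internally is the \eqref{KV1}$\Leftrightarrow$\eqref{senary relation} equivalence (Proposition \ref{prop: Schneps Appendix}, via Lemmas \ref{lem:refo1:KV1} and \ref{lem:refo2:KV1}), which you correctly reuse for step (ii). Your two peripheral observations are also sound: pus-neutrality of $\swap(M)$ at $m=1$ does force $M^1=0$, and the depth~$\geqslant 2$ hypothesis does kill the scalar $\alpha$ in \eqref{KV2} (the left-hand side has no depth-one cyclic words once $F\in\Fil_{\D}^2\mathbb L$), so the strict vanishing $\tr(G_xx+F_yy)=0$ is indeed what has to match strict pus-neutrality.

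The genuine gap is step (iii), which is the actual content of the theorem beyond what this paper proves. You assert as a ``key structural identity'' that the trace map corresponds exactly to the cyclic symmetrisation $\sum_{i}\pus^i$ acting on $\swap(\ma_{\tilde f})$, but this dictionary is precisely what \cite{RS} and \cite{FK} spend their main computations establishing, and as stated it is too naive to be quoted as evident. First, \eqref{KV2} involves $G=G(F)$, so before any mould translation you must eliminate $G$ using \eqref{KV1} (e.g.\ via $G=s'(F_x)$ as in the proof of Lemma \ref{lem:refo1:KV1}); consequently the equivalence \eqref{KV2}$\Leftrightarrow$pus-neutrality is only proved \emph{in the presence of} \eqref{KV1} (push-invariance), so your ``three separate equivalences'' do not literally decouple, and the statement you need is the conditional one. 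Second, the passage from cyclic words in $x,y$ to the $\pus$-action on the swapped mould is delicate because $\ma$ forgets all words beginning in $x$ (it evaluates $\vimo$ at $z_0=0$), while $\tr(G_xx)$ and $\tr(F_yy)$ enter \eqref{KV2} asymmetrically; reconciling these two contributions, after the substitution $F\mapsto\tilde f$ and the $\swap$, is exactly the computation you have postponed, and without it the claimed bijection onto $\ARI_{\pspush/\pusnu}\cap\ARI_{\al}^{\fin}$ is not established. Surjectivity also silently uses that every alternal finite polynomial mould is $\ma_h$ for a Lie $h$ and that $\ma|_{\mathbb L_w}$ is injective; these are fine to cite (\cite{FHK}), but they deserve explicit statements since $\ma$ is far from injective on $\mathbb A$.
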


For $\ARI_{\pspush/\pusnu}$ and $\ARI_\al$,
 see Definition \ref{def:ARIpushpusnu}
and \ref{def:ARIal} respectively.

\begin{rem}
The above theorem is extended to $\krv(\Gamma)$
with an arbitrary abelian group $\Gamma$
and a mould theoretic interpretation of
its bigraded variant $\lkrv(\Gamma)$ is provided
in \cite{FK}.
\end{rem}

\bigskip
%%%%%%%%%%%%%%%%%%%%%%%%%%%%%%%%%%%%%%%%%%%%%%%%%%%%%%%%%%%%%%%%%%%%%%%
%\subsection{Double shuffle Lie algebra}%\label{sec:dmr}
%We recall the definition of the double shuffle Lie algebra  $\dmr$.
%Schneps \cite{S} showed that
%there is an inclusion from $\dmr$ to $\krv$  by assuming a validity of
%the senary relation for $\dmr$
%although which is claimed in \cite{E-flex} without proof.
%In this section, by analyzing her arguments,
%we construct an inclusion from
%the intersection of $\dmr\cap\sder$ to $\krv$
%without posing any assumptions.

Let $\mathbb A_Y:=\Q\langle y_1,y_2,y_3,\dots\rangle$ be the free associative  non-commutative $\Q$-algebra generated 
by the variables $y_m$'s ($m\in\N$)
with unit.
It is equipped with a structure of Hopf algebra with the coproduct 
$\Delta_\ast$ given by $\Delta_\ast(y_n)=\sum_{i=0}^n y_i\otimes y_{n-i}$
($y_0:=1$). 
Let $\pi_Y:\mathbb A\to\mathbb A_Y$ be
the $\Q$-linear projection
that sends all the words ending in $x$ to zero and the
word $x^{n_m-1}y\cdots x^{n_1-1}y$ ($n_1,\dots,n_m\in\N$)
to %$(-1)^m
$y_{n_m}\cdots y_{n_1}$.
For $\varphi=\sum_{W:\text{word}} c_W(\varphi) W\in\mathbb A$,
define %the series shuffle regularization
\begin{equation*}
\varphi_*=\varphi_{\text{corr}}+\pi_Y(\varphi)
%\end{equation*}
\quad\text{with}\quad 
%the correction term
%\begin{equation}\label{correction}
\varphi_{\text{corr}}=\sum_{n=1}^{\infty}
\frac{(-1)^{n-1}}{n}c_{x^{n-1}y}(\varphi)y_1^n.
\end{equation*}

\begin{defn}[\cite{R}]\label{defn:dmr}
Set-theoretically the {\it double shuffle Lie algebra}
$\dmr=\oplus_{w> 1}\dmr_w$ is defined to be the set 
of $\tilde f\in\mathbb L$ 
satisfying $c_{xy}(\tilde f)=0$ and
\begin{equation}\label{Lie double shuffle}
\Delta_*(\tilde f_*)=1\otimes \tilde f_*+\tilde f_*\otimes 1.
\end{equation}
\end{defn}

For $\varphi\in\mathbb L$, let $d_\varphi\in\der$ be the derivation determined by 
\begin{equation}\label{eq:derivation d}
d_\varphi(y)=[y,\varphi(x,y)] 
\quad\text{and}\quad 
d_\varphi(x+y)=0.
\end{equation}
Then \cite[Proposition 4.A.i]{R} claims that
the inclusion
$$
\dmr\subset\der
$$
sending ${\tilde f}\mapsto d_{F}$
equips $\dmr$  with a structure of Lie subalgebra  of $\der$.
For $d\geqslant 1$, we denote by  $\Fil_{\D}^d\dmr_w$ the subspace of $\dmr_w$ 
consisting of $\tilde f\in \Fil_{\D}^d\mathbb L_w$
and we put $\Fil_{\D}^d\dmr=\oplus_w\Fil_{\D}^d\dmr_w$.
Then $\dmr$ forms a filtered Lie algebra with the filtration 
$\{ \Fil_{\D}^d\dmr\}_d$.

%\Add{In \cite[Theorem 3.4.4]{S-ARIGARI}, it is shown that its finite part
%$\ARI_{\underline\al/\underline\il}^{\fin}$
%is isomorphic to $\dmr$.}

A mould theoretical interpretation of $\dmr$ is given as follows:

\begin{thm}
[{\cite[Theorem 3.4.4]{S-ARIGARI}}]
The map  $\tilde f\in\mathbb A	\mapsto\ma_{\tilde f}\in\mathcal M({\mathcal F})$ gives an isomorphism %between
of filtered Lie algebras
$$\Fil_{\D}^{2}\dmr\simeq%\Fil^2_{\D}
\Fil_{\D}^{2}\ARI_{\underline\al\ast\underline\il}^{\fin}.$$
\end{thm}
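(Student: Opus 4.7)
The plan is to establish the isomorphism through four steps: (a) well-definedness of $\tilde f \mapsto \ma_{\tilde f}$ as a map into $\Fil_{\D}^{2}\ARI_{\underline\al\ast\underline\il}^{\fin}$, (b) injectivity, (c) surjectivity, and (d) compatibility with the Lie bracket and the depth filtration.

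For (a), I would translate the two defining conditions of $\dmr$ into mould conditions separately. Alternality of $\ma_{\tilde f}$ should follow from $\tilde f$ being a Lie polynomial: the $\ma$-map is a filtered algebra homomorphism $\mathbb A \to \mathcal M(\mathcal F)^{\fin}$, and under it the shuffle coproduct on $\mathbb A$ corresponds to a coproduct on moulds whose primitives are precisely the alternal ones, so Lie elements (being shuffle-primitive) land in $\ARI_{\al}$. The alternility-up-to-constant condition on $\swap(\ma_{\tilde f})$ encodes the stuffle side: the operator $\swap$ converts the $(u_1,\dots,u_r)$ variables attached to the $(x,y)$-picture into the $y_m$-picture, so the stuffle identity \eqref{Lie double shuffle} for $\tilde f_* = \tilde f_{\corr} + \pi_Y(\tilde f)$ becomes the alternility equation \eqref{eq:alternil} for $\swap(\ma_{\tilde f})$, with the correction term $\tilde f_{\corr}$ accounting precisely for the constant-mould ambiguity built into Definition \ref{ARIalil}.

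For (b), injectivity is clear because the coefficients $a(\tilde f)_{e_0,\dots,e_r}$ of $\tilde f$ can be read off directly from $\ma^r_{\tilde f}$ via the definition of $\vimo^r_{\tilde f}$. For (c), given $M \in \Fil_{\D}^{2}\ARI_{\underline\al\ast\underline\il}^{\fin}$, one reconstructs a unique $\tilde f \in \mathbb A$ by inverting the same formula, and alternality of $M$ forces $\tilde f$ to lie in $\mathbb L$ by a Friedrichs-type criterion, while the alternility-plus-constant condition on $\swap(M)$ translates back into \eqref{Lie double shuffle}. The depth shift $\Fil_{\D}^{2}$ on both sides matches immediately since $\ma$ manifestly respects depth.

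For (d), one must verify the bracket identity $\ma_{\{\tilde f,\tilde g\}_{\dmr}} = \ari(\ma_{\tilde f}, \ma_{\tilde g})$, where the left-hand bracket comes from Racinet's embedding $\tilde f \mapsto d_F$ into $\der$ and the right-hand side is the ari-bracket of Remark \ref{rem: ari-bracket}. I expect this bracket identification to be the main obstacle, as it requires matching Racinet's explicit formula for the Lie bracket on $\dmr$ against the flexion operations defining $\ari$ in a depth-by-depth combinatorial check. Filtration preservation on both sides is automatic, so once the bracket identity is established, combining it with (a)--(c) yields the asserted isomorphism of filtered Lie algebras.
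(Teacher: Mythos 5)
First, a point of comparison: the paper does not prove this statement at all --- it is quoted verbatim from \cite[Theorem 3.4.4]{S-ARIGARI} (see also Remark \ref{rem:SaSch} and \cite{SS}), so there is no internal proof to measure your argument against; it has to stand on its own. As a stand-alone argument it is the standard strategy, but it is a plan rather than a proof, and two of its steps are defective as written. The cleanest concrete error is in (b)/(c): you claim the coefficients $a(\tilde f)_{e_0,\dots,e_r}$ can be read off from $\ma^r_{\tilde f}$ ``via the definition of $\vimo^r_{\tilde f}$''. But $\ma^r_{\tilde f}(u_1,\dots,u_r)=\vimo^r_{\tilde f}(0,u_1,u_1+u_2,\dots,u_1+\cdots+u_r)$ evaluates $\vimo$ at $z_0=0$, so every coefficient with $e_0>0$ is invisible: $\ma_{\tilde f}$ only records the coefficients of words beginning with $y$. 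Injectivity on $\oplus_{w>1}\mathbb L_w$ therefore rests on the non-trivial lemma (due to Racinet/Schneps) that a homogeneous Lie polynomial of weight $>1$ is determined by this projection, and surjectivity requires the converse construction, in which alternality is identified as exactly the set of constraints satisfied by that projection of a Lie element; neither is supplied by ``inverting the same formula''.

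Second, the two statements that constitute the actual substance of the cited theorem are asserted or deferred. The translation of the stuffle primitivity \eqref{Lie double shuffle} for $\tilde f_*=\tilde f_{\mathrm{corr}}+\pi_Y(\tilde f)$ into alternility of $\swap(\ma_{\tilde f})$ up to a constant mould is stated as a dictionary, but you do not verify that the constant-mould ambiguity matches the regularization term, nor do you address why the statement is restricted to $\Fil_{\D}^{2}$ (and what role the condition $c_{xy}(\tilde f)=0$ plays); these regularization/depth-one issues are precisely where the correspondence is delicate. Likewise, the compatibility $\ma_{\{\tilde f,\tilde g\}}=\ari(\ma_{\tilde f},\ma_{\tilde g})$, i.e.\ the identification of the bracket induced by $\tilde f\mapsto d_F$ with the ari-bracket of Remark \ref{rem: ari-bracket}, is exactly the hard computational core (carried out in \cite{SS} and \cite{S-ARIGARI}), and you explicitly leave it as an expected ``main obstacle''. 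So the decomposition (a)--(d) is the right skeleton, but as it stands the proposal does not prove the theorem: step (b)/(c) uses a false reconstruction claim that must be replaced by the projection lemma, and steps (a) and (d) are the cited results themselves rather than arguments for them.
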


For $\ARI_{\underline\al\ast\underline\il}$, see Definition \ref{ARIalil}.

\begin{rem}
A mould theoretic interpretation of
Goncharov's dihedral Lie algebra ${\mathbb D}(\Gamma)$ (cf. \cite{G}),
which is regarded to be a bigraded variant of  $\dmr$ in the case of $\Gamma=\{e\}$,
is given in \cite{FK}.
\end{rem}

%%%%%%%%%%%%%%%%%%%%%%%%%%%%%%%%%%%%%%%%%%%%%%%%%%%%%%%%%%%%%%%%%%%%%%%
\section{The senary relation and $\sder$}
%\section{Proof of Proposition \ref{prop: Schneps Appendix}}
% Theorem {\ref{Schneps Theorem 1.1}${}^{\prime}$}}
%Theorem \ref{thm:main}} 
\label{sec:proof of theorem}

%The first part of 
%Theorem {\ref{Schneps Theorem 1.1}${}^{\prime}$}
%Theorem \ref{thm:main} 
%is proved in 
%\S\ref{subsec:proof of (i)}. 
%gives a self-contained proof of Theorem \ref{thm:main},
%which is actually a reproduction of the arguments of Lemma 2.10 -2.11
%in \cite{FK}  in the case of $\Gamma=\{e\}$.
 
%%\subsection{Proof of (i)}\label{subsec:proof of (i)}
%We prepare  two lemmas which
%%are indebted of  the arguments in \cite{S}
%are reproductions of  Lemma 2.10  and 2.11
%in \cite{FK}  in the case of $\Gamma=\{e\}$ and
%but which are required to prove 
%Proposition \ref{prop: Schneps Appendix}.
%%Theorem {\ref{Schneps Theorem 1.1}${}^{\prime}$}:
%%Theorem \ref{thm:main}.(i):

For the reader's convenience,
we give a streamlined proof of Proposition \ref{prop: Schneps Appendix}
by extracting the arguments in \cite{S} and also
%We proceed this section by 
reproducing those of \cite[\S 2]{FK} 
in the case of $\Gamma=\{e\}$.
%where the results are generalized to an arbitrary abelian group.

An element $g\in\mathbb A_w$  is called {\it anti-palindromic} when $g=(-1)^w\anti(g)$
where $\anti$ is the palindrome (backwards-writing) operator (explained in \cite[Definition 1.3]{S}).

%The following is nothing but the equivalence of (i) and (v) of {\cite[Theorem 2.1]{S}}
%which is derived from their equivalence with the intermediate conditions (ii), (iii), (iv),
%however we give a direct proof.

\begin{lem}[{\cite[Theorem 2.1]{S}}]\label{lem:refo1:KV1}
Let $F\in\mathbb L_w$ with $w>2$. 
Then  \eqref{KV1} for $F$ is equivalent to 
%saying that
the anti-palindrome  for
$\tilde f_y+\tilde f_x$ %is anti-palindromic
when we write $\tilde f=\tilde f_xx+\tilde f_yy$.
\end{lem}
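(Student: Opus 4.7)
The plan is to recast (KV1) into an equivalent condition on $\tilde f$ by transporting it through the change of variables between $F$ and $\tilde f$. From $f(x,y)=F(z,y)$ with $z=-x-y$ and $\tilde f(x,y)=f(x,-y)$ one gets $F(x,y)=\tilde f(-x-y,-y)$, so the passage between $F$ and $\tilde f$ is governed by the involution $\phi\colon(x,y)\mapsto(-x-y,-y)$ of $\mathbb A$. A direct check shows that the induced substitution $\phi^{*}$ commutes with the palindrome operator $\anti$.

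First, I would record the dictionary between the right-writing coefficients. Substituting $\phi$ into $\tilde f=\tilde f_x x+\tilde f_y y$ and matching the result with $F=F_x x+F_y y$ yields
\[
F_x(x,y)=-\tilde f_x(-x-y,-y),\qquad F_y(x,y)=-(\tilde f_x+\tilde f_y)(-x-y,-y).
\]
In particular the sum $\tilde f_x+\tilde f_y$ is identified, up to sign and the involution $\phi^{*}$, with $F_y$.

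Next, I would expand $[x,G]+[y,F]=0$ using $F=F_x x+F_y y$ and $G=G_x x+G_y y$, and separate by the final letter of each word, obtaining the coupled pair $xG_y+[y,F_y]-F_x x=0$ and $[x,G_x]-G_y y+yF_x=0$. The first equation determines $G_y$ subject to the solvability condition $F_x x-[y,F_y]\in x\mathbb A$, and the second then determines $G_x$. Using that $F,G\in\mathbb L$ satisfy the standard Lie anti-palindromy $\anti(F)=(-1)^{w-1}F$, I would reduce the joint solvability to a single anti-palindromy on $F_y$, namely $F_y=(-1)^{w-1}\anti(F_y)$. Transporting this across $\phi^{*}$ via $\anti\circ\phi^{*}=\phi^{*}\circ\anti$ and the involution property of $\phi$ then produces the target $\tilde f_x+\tilde f_y=(-1)^{w-1}\anti(\tilde f_x+\tilde f_y)$, i.e., anti-palindromy of $\tilde f_x+\tilde f_y\in\mathbb A_{w-1}$. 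The hypothesis $w>2$ enters through $\krv_2=\{0\}$ and through the uniqueness of $G=G(F)$.

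The main difficulty I expect is the reduction in the second step: extracting from the coupled $G$-system a single clean necessary-and-sufficient condition on $F$ alone, and checking that once $G_y\in\mathbb L$ is found, the remaining equation for $G_x$ automatically admits a Lie solution. This is where the Lie-ness of $F$ and $G$ genuinely enters, in contrast to the purely associative substitutions of the first and third steps.
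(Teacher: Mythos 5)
Your opening reduction is sound: the dictionary $F_x=-\tilde f_x\circ\phi$, $F_y=-(\tilde f_x+\tilde f_y)\circ\phi$ with $\phi(x,y)=(-x-y,-y)$ is correct, and since $\phi$ sends generators to homogeneous degree-one elements, the substitution commutes with $\anti$; hence anti-palindromy of $\tilde f_x+\tilde f_y$ is indeed equivalent to anti-palindromy of $F_y$ (the paper reaches the same point by quoting Schneps' Proposition 2.6, which gives $f_y-f_x=F_y(-x-y,y)$; note only that $\phi$ is invertible but not an involution, which is all you actually need). The decomposition of \eqref{KV1} by final letter into the pair $xG_y+[y,F_y]-F_xx=0$ and $[x,G_x]-G_yy+yF_x=0$ is also correct.

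The genuine gap sits exactly where you flag "the main difficulty": the equivalence of \eqref{KV1} with the anti-palindromy of $F_y$ is the entire content of the lemma beyond the change of variables, and you only announce that you "would reduce" the coupled system to it, without an argument. Two things are missing. For necessity, one needs an actual word analysis; the paper does it cleanly by writing $H=[y,F]=yF_yy+yF_xx-yF^yy-xF^xy$ (with $F=xF^x+yF^y$ the left decomposition) and observing that \eqref{KV1} forces $H=Gx-xG$, which has no words beginning and ending in $y$, whence $F_y=F^y$; combined with $F^y=(-1)^{w-1}\anti(F_y)$ for Lie $F$, this is the anti-palindromy of $F_y$. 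For sufficiency — the more serious omission — the issue is not solvability of your two equations in $\mathbb A$: the definition of \eqref{KV1} requires $G\in\mathbb L_w$, and finding $G_x,G_y\in\mathbb A$ satisfying the coupled system neither guarantees that $G_xx+G_yy$ is a Lie element nor is it "automatic" that the second equation is solvable once the first is. The paper supplies precisely this input by invoking Schneps' Proposition 2.2, which produces a Lie element $G$ with $[y,F]=[G,x]$ from the fact that $[y,F]$ has no words starting and ending in $y$. Without that lemma (or an equivalent substitute), your plan stalls at the decisive step, so as written the proposal is an outline of the easy reductions around the core equivalence rather than a proof of it.
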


%We note that   $g\in\mathbb A_w$ is called {\it anti-palindromic} when 
%$g=(-1)^w\anti(g)$
%where $\anti$ is the palindrome (backwards-writing) operator (cf. \cite[Definition 1.3]{S}).

The claim is nothing but the equivalence between (i) and (v) of {\cite[Theorem 2.1]{S}},
which is deduced from their equivalence with the intermediate conditions (ii), (iii), (iv) in loc.~cit.~
We  present a shorter proof below.

\begin{proof}
Let us write $F=F_xx+F_yy=xF^x+yF^y$.
By \cite[Proposition 2.6]{S}, we have
$$
f_y-f_x=F_y(-x-y,y).
$$
Therefore
the anti-palindrome for $\tilde f_y+\tilde f_x$,
so for $f_y-f_x$, 
is equivalent to  the anti-palindrome for $F_y$.
Since we have $F^y=(-1)^{w-1}\anti(F_y)$ for $F\in\mathbb L$,
it is equivalent to $$F_y=F^y.$$
Therefore our claim is reduced to prove an equivalence  between
 \eqref{KV1} 
 %for $F$ 
 and $F_y=F^y$.

Assume \eqref{KV1} for $F$.
Set $H=[y,F]$.
We have
\begin{equation*}\label{eqn for H}
H=yF_yy+yF_xx-yF^yy-xF^xy. %=xG^xx+yG^yx-xG_xx-xG_yy.
\end{equation*}
We have $H=Gx-xG$ by $[y,F]+[x,G]=0$. 
So $H$ has no words starting and ending in $y$.
%By \eqref{eqn for H}, 
Thus we must have $F_y=F^y$.

Conversely assume  $F_y=F^y$.
Then  $H$ has no words starting and ending in $y$.
By \cite[Proposition 2.2]{S}, there is a $G\in\mathbb L_{w}$
such that $H=[G,x]$,
which is calculated to be $G(F)=s'(F_x)$.
We get \eqref{KV1}.
%Firstly, we show that \eqref{KV1} for $F$ is equivalent to $F_y=F^y$
%when we write $F=F_xx+F_yy=xF^x+yF^y$.
%Set $H=[y,F]$.
%Then  we have
%\begin{equation}\label{eqn for H}
%H=yF_yy+yF_xx-yF^yy-xF^xy. %=xG^xx+yG^yx-xG_xx-xG_yy.
%\end{equation}
%
%Assume \eqref{KV1} for $F$.
%Then $H=Gx-xG$ since we have $[y,F]+[x,G]=0$. 
%So $H$ has no words starting and ending in $y$.
%By \eqref{eqn for H}, we have $F_y=F^y$.
%%and write $F=F_xx+F_yy=F^xx+F^yy$ and $G=G_xx+G_yy=G^xx+G^yy$.
%
%Conversely assume  $F_y=F^y$.
%Then  $H$ has no words starting and ending in $y$.
%By \cite[Proposition 2.2]{S}, there is a $G\in\mathbb L_{w}$
%such that $H=[G,x]$,
%which is calculated to be $G(F)=s'(F_x)$.
%Whence we get \eqref{KV1}.

%Secondly,
%by $F\in\mathbb L$, we have $F^y=(-1)^{w-1}\anti(F_y)$.
%Therefore $F_y=F^y$ is equivalent to the anti-palindrome for $F_y$.
%
%Lastly,
%by \cite[Proposition 2.6]{S}, we have
%$$
%f_y-f_x=F_y(-x-y,y).
%$$
%Therefore the anti-palindrome for $F_y$ is equivalent to
%the anti-palindrome for $f_y-f_x$, so for $\tilde f_y+\tilde f_x$.
Whence we obtain the claim.
\end{proof}

The following is %suggested by 
indebted of
the arguments in \cite[Appendix A]{S}.
%we reproduce the arguments  to be self-contained.

\begin{lem}\label{lem:refo2:KV1}
Let $\tilde f\in\mathbb L_w$ with $w>2$. %Put  $\tilde f=f(x,-y)$.
Then anti-palindrome for $\tilde f_y+\tilde f_x$
is equivalent to the senary relation \eqref{senary relation}
%\footnote{
%It is because it consists of 6 terms, 3 terms on each hand sides.
%}
%(cf. \cite[(3.64)]{E-flex})
%\begin{equation}\label{senary relation}
%\teru( M)^r=\push\circ\mantar\circ\teru\circ\mantar( M)^r
%\end{equation}
for $1\leqslant r \leqslant w$
with $ M=\ma_{\tilde f}$.
\end{lem}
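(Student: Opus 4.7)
The idea is to translate the senary relation for $M = \ma_{\tilde f}$ into a polynomial identity for $\tilde f$ via the filtered algebra homomorphism $\ma\colon\mathbb A\to\mathcal M(\mathcal F)^{\fin}$, and identify this identity with the anti-palindromy of $\tilde f_y+\tilde f_x$. Since the $\ma$-map is completely explicit through $\vimo^r_h$, every operator appearing in \eqref{senary relation} can be unwound at the polynomial level.

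First, I would compute how each mould operator acts on $\ma_{\tilde f}$ in terms of the $\vimo$-description. Writing $\ma^r_{\tilde f}(u_1,\dots,u_r) = \vimo^r_{\tilde f}(0,u_1,u_1+u_2,\dots,u_1+\cdots+u_r)$, the operator $\teru$ becomes a divided-difference correction in the last variable block, $\mantar$ becomes a signed reversal of the $u_i$'s, and $\push$ becomes a cyclic rotation. The composite $\push\circ\mantar\circ\teru\circ\mantar$, once pushed through the substitution $z_i = u_1+\cdots+u_i$, produces an explicit polynomial combination of the coefficients $a(\tilde f)_{\mathbf e}$.

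Second, I would expand the defect $\teru(M)^r - \push\circ\mantar\circ\teru\circ\mantar(M)^r$ at each depth $1\leqslant r\leqslant w$. The divided differences from the two occurrences of $\teru$ conspire—after the two $\mantar$ reversals—so that the boundary terms (coming from the $1/u_m$ factor) cancel, leaving a polynomial whose coefficients precisely match those of $\tilde f_y + \tilde f_x - (-1)^w\anti(\tilde f_y+\tilde f_x)$ on words of the corresponding length. Since the senary relation is trivially satisfied for $r>w$ (as $\ma^r_{\tilde f}=0$), this accounts for the full range in the statement.

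Third, since anti-palindromy of a homogeneous polynomial of degree $w-1$ is a depth-homogeneous condition, the vanishing of the defect for each $r=1,\dots,w$ is equivalent to the single polynomial identity. This gives the claimed equivalence.

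The main obstacle is the second step: the composite $\push\circ\mantar\circ\teru\circ\mantar$ involves index reversals interleaved with divided differences in shifted arguments, and verifying that the boundary contributions of the two $\teru$'s cancel to leave precisely the $\anti$-operator on $\tilde f_y+\tilde f_x$ demands careful tracking of signs, affine substitutions in the $\vimo$-variables, and contributions coming from the depth-$(r-1)$ component of $\ma_{\tilde f}$.
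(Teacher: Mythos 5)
Your overall route---unwinding \eqref{senary relation} through the explicit $\vimo$-description of $\ma_{\tilde f}$ and matching the result against the anti-palindromy of $\tilde f_y+\tilde f_x$---is exactly the route the paper takes (following \cite[Appendix A]{S}). The problem is that the whole content of the lemma is the computation you defer to your second step, and that step is both missing and misdescribed. First, the mechanism you announce is not what happens: the divided-difference (``boundary'') terms produced by $\teru$ do \emph{not} cancel between the two sides. In the actual computation each side keeps its $\teru$-correction, and the matching is term-by-term: the main term $M^r$ corresponds to the $\tilde f_y$-component and the $\teru$-correction to the $\tilde f_x$-component of the anti-palindromy identity; this is precisely how \eqref{A10} and \eqref{A14} are identified with \eqref{A19} and \eqref{A20}. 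A proof that sets out to show the corrections cancel and that ``what is left'' is the defect of $\tilde f_y+\tilde f_x$ would not close. (Also a sign slip: $\tilde f_y+\tilde f_x$ lies in $\mathbb A_{w-1}$, so the relevant condition is $g=(-1)^{w-1}\anti(g)$, not $(-1)^{w}\anti(g)$; in an argument whose only difficulty is bookkeeping of signs this matters.)

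Second, your sketch nowhere invokes the two identities that make the comparison possible, and which are the only place the hypothesis $\tilde f\in\mathbb L_w$ enters: the translation invariance
$\vimo^r_{\tilde f^r}(z_0,\dots,z_r)=\vimo^r_{\tilde f^r}(0,z_1-z_0,\dots,z_r-z_0)$
valid for Lie elements (equation \eqref{A13}), and the homogeneity parity
$\vimo^r_{\tilde f^r}(z_0,\dots,z_r)=(-1)^{w-r}\vimo^r_{\tilde f^r}(-z_0,\dots,-z_r)$.
The mould $\ma_{\tilde f}$ only records the values of $\vimo_{\tilde f}$ with first argument $0$, whereas $\mantar$ (and the operator $\anti$ on the polynomial side) reverses the string and moves that $0$ to the last slot; without \eqref{A13} and the parity there is no affine substitution converting $\push\circ\mantar\circ\teru\circ\mantar(\ma_{\tilde f})^r$ into the reversed expression \eqref{A20}, so the matching you assert cannot even be set up for a general homogeneous $\tilde f$. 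Since you yourself flag this step as ``the main obstacle'' rather than carrying it out, the proposal is a plausible plan but not a proof. (A small further point: triviality of \eqref{senary relation} for $r=w+1$ requires $\ma^{w}_{\tilde f}=0$, which holds because a Lie element of weight $w>1$ has depth $<w$, not merely because $\ma^r_{\tilde f}=0$ for $r>w$; but that range is not needed for the statement anyway.)
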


\begin{proof}
In this proof, we quote the same equation numbers in \cite[Appendix A]{S}.

%As is explained in 
%The arguments of 
The first two paragraphs of
the proof of \cite[Proposition A.3]{S} says that
%the equivalence between
the anti-palindrome for $\tilde f_y+\tilde f_x$ 
is equivalent to 
%and
\begin{equation*}%\tag{A15}\label{A15}
\vimo_{\tilde f_y^r+\tilde f_x^r}^r  (z_0,\dots,z_r) 
= (-1)^{w-1}
\vimo_{\anti(\tilde f_y^r+\tilde f_x^r)}^r  (z_0,\dots,z_r)
\end{equation*}
for $0\leqslant r \leqslant w-1$. % (we must include $m=0$)
In the proof of \cite[Proposition A.3]{S}, 
it is calculated that
its left hand side is %calculated to be
%it is calculated to be
\begin{align}\tag{A19}\label{A19}
%\vimo_{\tilde f_x^m+\tilde f_y^m}^m  (z_0,\dots,z_m) = 
&\vimo_{\tilde f^{r+1}}^{r+1}  (z_0,\dots,z_r,0) \\ \notag
&+\frac{1}{z_r}\left\{
\vimo^m_{\tilde f^r}(z_0,\dots,z_{r-1},z_r)-\vimo^r_{\tilde f^r}(z_0,\dots,z_{r-1},0)
\right\}
\end{align}
and while its right hand side is calculated to be
\begin{align}\tag{A20}\label{A20}
(-1)^{w-1}\bigl[
&\vimo_{\tilde f^{r+1}}^{r+1}  (z_r,\dots,z_0,0) \\ \notag
+&\frac{1}{z_0}\left\{
\vimo^r_{\tilde f^r}(z_r,\dots,z_{1},z_0)-\vimo^r_{\tilde f^r}(z_r,\dots,z_{1},0)
\right\}
\bigr].
\end{align}
It is noted  that the first term  corresponds to  $\tilde f^r_y$ and the second  one corresponds to $\tilde f^r_x$ in both equations.
Whence \eqref{KV1} is equivalent to 
$$\eqref{A19}=\eqref{A20}$$
for $1\leqslant r \leqslant w-1$.

On the other hand,
the  senary relation \eqref{senary relation} is equivalent to
\begin{equation}\tag{A8}\label{A8}
\swap\circ\teru( M)^r
=\swap\circ\push\circ\mantar\circ\teru\circ\mantar( M)^r.
\end{equation}
By definition, its left hand side is calculated to be
\begin{align}\tag{A10}\label{A10}
%\swap\circ&\teru(\ma_{\tilde f})^m=
&\vimo_{\tilde f^r}^r(0,v_r,\dots,v_1) \\ \notag
+\frac{1}{v_1-v_2}&\{
\vimo_{\tilde f^{r-1}}^{r-1}(0,v_r,\dots,v_3,v_1)-\vimo_{\tilde f^{r-1}}^{r-1}
(0,v_r,\dots,v_3,v_2)\}.
\end{align}
While by 
$\vimo_{\tilde f^r}^r(z_0,\dots,z_r) =(-1)^{w-r}\vimo_{\tilde f^r}^r(-z_0,\dots,-z_r) $
and
\begin{equation}\tag{A13}\label{A13}
\vimo_{\tilde f^r}^r(z_0,\dots,z_r) =\vimo_{\tilde f^r}^r(0,z_1-z_0,\dots,z_r-z_0)
\end{equation}
for $\tilde f\in\mathbb L$, 
its right hand side is %also calculated to be
\begin{align}\tag{A14}\label{A14}
&(-1)^{w-1} \Bigl[ 
\vimo_{\tilde f^r}^r  (v_2,v_3,\dots,v_r,0,v_1) \\ \notag
\qquad+&\frac{1}{v_1}\left\{
\vimo_{\tilde f^{r-1}}^{r-1}(v_2,v_3,\dots,v_r,v_1)-\vimo_{\tilde f^{r-1}}^{r-1}(v_2,v_3,\dots,v_r,0)\right\}
\Bigr].
\end{align}
Therefore the senary relation \eqref{senary relation} is equivalent to 
$$\eqref{A10}=\eqref{A14}$$
for $1\leqslant r \leqslant w$.

By \eqref{A13} and the change of variables 
%\begin{equation}\tag{A21}\label{A21}
$z_0=-v_1, z_1=v_r-v_1,\dots, z_{r-1}=v_2-v_1$,
%\end{equation}
the equation \eqref{A10} is calculated to be
\begin{align*}%\tag{A22}\label{A22}
&\vimo_{\tilde f^r}^r(z_0,\dots,z_{r-1},0) \\
&+\frac{1}{z_{r-1}}
\left\{
\vimo^{r-1}_{\tilde f^{r-1}}(z_0,\dots,z_{r-2},z_{r-1})-\vimo^{r-1}_{\tilde f^{r-1}}(z_0,\dots,z_{r-2},0)
\right\},
\end{align*}
which is  \eqref{A19} with $r$ replaced with $r-1$. 
Again by \eqref{A13} and the above change of variables, % \eqref{A21},
the equation \eqref{A14} is calculated to be
\begin{align*}
&(-1)^{w-1}\Bigl[
\vimo_{\tilde f^r}^r(z_{r-1},\dots,z_1,z_{0},0) \\
& \qquad\qquad +\frac{1}{z_0}
\left\{
\vimo_{\tilde f^{r-1}}^{r-1}(z_{r-1},\dots,z_1,z_0) 
-\vimo_{\tilde f^{r-1}}^{r-1}(z_{r-1},\dots,z_1,0) 
\right\}
\Bigr],
\end{align*}
which is \eqref{A20}  with $r$ replaced with $r-1$.
Whence we obtain the equivalence between \eqref{KV1} and
\eqref{senary relation}.
\end{proof}

\bigskip
%\noindent
%{\bf Proof of Theorem \ref{thm:main}.(i)}.
%Then the proof of 
%Theorem {\ref{Schneps Theorem 1.1}${}^{\prime}$
%Theorem \ref{thm:main}.(i) 
%goes as follows:\\
%(i). 
Let $F$ (so whence $\tilde f$) be in $\mathbb L_w$ with $w>2$.
By definition, saying $d_F\in\sder$ is equivalent to
\eqref{KV1} for $F$.
While by Lemmas \ref{lem:refo1:KV1} and \ref{lem:refo2:KV1},
it is equivalent to 
the senary relation \eqref{senary relation} for $M=\ma_{\tilde f}$.
Therefore Proposition \ref{prop: Schneps Appendix} follows.

%\\
%(ii).
%Therefore it follows from Theorem \ref{Schneps Theorem 1.1} that
%the intersection $\dmr\cap \sder$ is contained in $\krv$.
%\qed

%%%%%%%%%%%%%%%%%%%%%%%%%%%%%%%%%%%%%%%%%%%%%%%%%%%%%%%%%%%%%%%%%%%%%%%
\section{The senary relation for small $r$}
%\section{Proof of Theorem \ref{thm:senary check}}
\label{sec:proof of senary check theorem}
We check the validity of the assumption \eqref{senary relation} 
for any mould $M\in\Fil_{\D}^{2}\ARI_{\underline\al\ast\underline\il}$
in small depth $r=1,2,3$.

We define the parallel translation map $\h:\mathcal M(\mathcal F) \to \mathcal M(\mathcal F)$ by
$$
\h(M)^m(\vecx_m)
:=\left\{\begin{array}{ll}
M^m(\vecx_m) & (m=0,1), \\
M^{m-1}(x_2-x_1,\dots,x_m-x_1) & (m\geqslant2),
\end{array}\right.
$$
for $M\in\mathcal M(\mathcal F)$, and for our simplicity, we put
$$
\tswap=\h\circ\swap.
$$
For $m\geqslant2$ and for $1\leqslant i\leqslant m-1$, we consider
the collision map
$$\coll^m_{{i,i+1}}:\mathcal M(\mathcal F)\rightarrow\mathcal M(\mathcal F)$$
%\Erase{$f_{m,i}:\mathcal M(\mathcal F)\rightarrow\mathcal M(\mathcal F)$}
which is  $\mathbb Q$-linearly defined by the collision map
$\coll^m_{{i,i+1}}(M)^j(\vecx_j):=(M)^j(\vecx_j)$
%\Erase{$f_{m,i}(M)^j(\vecx_j):=(M)^j(\vecx_j)$}
 when $j\neq m$ and
\begin{align*}
	&\coll^m_{{i,i+1}}(M)^m(\vecx_m) :=\frac{1}{x_i-x_{i+1}}
	\left\{ M^{m-1}(x_1,\dots,x_{i-1},x_i,x_{i+2},\dots,x_m) \right. \\
	&\hspace{5cm}\left. -M^{m-1}(x_1,\dots,x_{i-1},x_{i+1},x_{i+2},\dots,x_m) \right\}
\end{align*}
for $M\in\mathcal M(\mathcal F)$.

\begin{lem}\label{lem:senary relation 2}
Let $r\geqslant1$. Ecalle's senary relation \eqref{senary relation} for  $M\in\ARI$ is equivalent to
	\begin{align}\label{senary relation 2}
		&\tswap(M)^{r+1}(x_1,\dots,x_{r+1})+\coll^{r+1}_{{2,3}}\circ \tswap(M)^{r+1}(x_1,\dots,x_{r+1}) \\
		&= \tswap(M)^{r+1}(x_2,\dots,x_{r+1},x_1)+\coll^{r+1}_{{1,2}}\circ \tswap(M)^{r+1}(x_1,\dots,x_{r+1}). \nonumber
	\end{align}
\end{lem}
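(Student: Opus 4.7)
The plan is to apply the operator $\tswap = \h \circ \swap$ to both sides of \eqref{senary relation} at depth $r$ and show that the resulting identity is precisely \eqref{senary relation 2} at depth $r+1$. Concretely, on depth $r+1$ the map $\tswap$ realizes the invertible change of variables $u_1 = x_{r+1}-x_1$ and $u_j = x_{r+2-j}-x_{r+3-j}$ for $2\leqslant j\leqslant r$, with $x_1$ serving as a free additional parameter. Bijectivity of this substitution is what converts an identity among polynomials in $(u_1,\dots,u_r)$ into an equivalent identity among polynomials in $(x_1,\dots,x_{r+1})$, so checking equivalence amounts to computing the $\tswap$-image of each side.

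The first step is to establish
\[
\tswap\bigl(\teru(M)\bigr)^{r+1}(x_1,\dots,x_{r+1}) = \tswap(M)^{r+1}(x_1,\dots,x_{r+1}) + \coll^{r+1}_{{2,3}}\bigl(\tswap(M)\bigr)^{r+1}(x_1,\dots,x_{r+1})
\]
by direct unwinding of the definitions. The bare polynomial part $M^r$ in $\teru(M)^r$ transforms into $\tswap(M)^{r+1}(x_1,\dots,x_{r+1})$, while the divided difference $\tfrac{1}{u_r}\{M^{r-1}(\ldots,u_{r-1}+u_r)-M^{r-1}(\ldots,u_{r-1})\}$ becomes $\tfrac{1}{x_2-x_3}\{\tswap(M)^r(x_1,x_2,x_4,\dots,x_{r+1}) - \tswap(M)^r(x_1,x_3,x_4,\dots,x_{r+1})\}$ after the reversal performed by $\swap$ and the translation performed by $\h$. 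This matches exactly the definition of $\coll^{r+1}_{{2,3}}$ applied to $\tswap(M)$.

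The second step is the parallel identity for the right-hand side of the senary relation:
\[
\tswap\bigl(\push\circ\mantar\circ\teru\circ\mantar(M)\bigr)^{r+1}(x_1,\dots,x_{r+1}) = \tswap(M)^{r+1}(x_2,\dots,x_{r+1},x_1) + \coll^{r+1}_{{1,2}}\bigl(\tswap(M)\bigr)^{r+1}(x_1,\dots,x_{r+1}).
\]
Here the conjugation $\mantar\circ\teru\circ\mantar$ turns the last-position divided difference of $\teru$ into a first-position one, the subsequent $\push$ cyclically rotates the arguments, and $\tswap$ then sends the polynomial part to the cyclic shift $\tswap(M)^{r+1}(x_2,\dots,x_{r+1},x_1)$ and the first-position divided difference to $\coll^{r+1}_{{1,2}}\bigl(\tswap(M)\bigr)^{r+1}$.

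The main obstacle is the bookkeeping in this second identity: tracking how $\tswap\circ\push\circ\mantar\circ\teru\circ\mantar$ acts on a general mould requires carefully aligning the cyclic-shift and sign conventions of $\push$ with the translation built into $\h$, and verifying that the composed effect is exactly the cyclically-shifted polynomial plus the $(1,2)$-collision term. Once these two identities are in hand, subtracting them gives the desired equivalence: \eqref{senary relation} for $M$ at depth $r$ holds if and only if \eqref{senary relation 2} holds for $\tswap(M)$ at depth $r+1$.
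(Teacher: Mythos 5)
Your proposal is correct and is essentially the paper's own argument read in the opposite direction: the paper expands both sides of \eqref{senary relation 2} via the definitions of $\tswap$ and $\coll^{r+1}_{i,i+1}$ and then substitutes $y_1=x_{r+1}-x_1$, $y_i=x_{r+2-i}-x_{r+3-i}$ to recover $\teru(M)^r=\push\circ\mantar\circ\teru\circ\mantar(M)^r$, which is exactly your pair of identities for $\tswap(\teru(M))$ and $\tswap(\push\circ\mantar\circ\teru\circ\mantar(M))$ together with the observation that the change of variables (with $x_1$ a free parameter) makes the two identities equivalent. Both of your claimed identities check out, so the approach coincides with the paper's.
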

\begin{proof}
	We expand both sides of \eqref{senary relation 2}. By the definition of two maps $\coll^{r+1}_{{2,3}}$ and $u$, the left hand side of \eqref{senary relation 2} is as follows:
	\begin{align*}%\label{senary relation 2:left}
		 \tswap(M)^{r+1}&(x_1,\dots,x_{r+1})+\coll^{r+1}_{{2,3}}\circ \tswap(M)^{r+1}(x_1,\dots,x_{r+1}) \\
		=&M^r(x_{r+1}-x_1,x_r-x_{r+1},\dots,x_2-x_3) \nonumber \\
		&+\frac{1}{x_2-x_3}
			\left\{ M^{r-1}(x_{r+1}-x_1,x_r-x_{r+1},\dots,x_4-x_5,x_2-x_4) \right. \nonumber \\
		&\hspace{2cm}\left. -M^{r-1}(x_{r+1}-x_1,x_r-x_{r+1},\dots,x_4-x_5,x_3-x_4) \right\}. \nonumber
	\end{align*}
	On the other hand, the right hand side of \eqref{senary relation 2} is as follows:
	\begin{align*}%\label{senary relation 2:right}
	 \tswap(M)^{r+1}&(x_2,\dots,x_{r+1},x_1)+\coll^{r+1}_{{1,2}}\circ \tswap(M)^{r+1}(x_1,\dots,x_{r+1}) \\
		=&M^r(x_1-x_2,x_{r+1}-x_1,x_r-x_{r+1},\dots,x_3-x_4) \nonumber \\
		&+\frac{1}{x_2-x_1}
			\left\{ M^{r-1}(x_{r+1}-x_2,x_r-x_{r+1},\dots,x_4-x_5,x_3-x_4) \right. \nonumber \\
		&\hspace{2cm}\left. -M^{r-1}(x_{r+1}-x_1,x_r-x_{r+1},\dots,x_4-x_5,x_3-x_4) \right\}. \nonumber
	\end{align*}
	So by putting $y_1:=x_{r+1}-x_1$ and $y_i:=x_{r+2-i}-x_{r+3-i}$ ($2\leqslant i\leqslant r$), we get
	\begin{align*}
		&M^r(y_1,y_2,\dots,y_r)
		+\frac{1}{y_r}
			\left\{ M^{r-1}(y_1,\dots,y_{r-2},y_{r-1}+y_r) 
			-M^{r-1}(y_1,\dots,y_{r-1}) \right\} \\
		&=M^r(-y_1-\dots-y_r,y_1,\dots,y_{r-1}) \nonumber \\
		&+\frac{1}{y_1+\dots+y_r}
			\left\{ M^{r-1}(-y_2-\dots-y_r,y_2,\dots,y_{r-1})  -M^{r-1}(y_1,\dots,y_{r-1}) \right\}.
	\end{align*}
	This is just equal to
	\begin{equation*}
		\teru( M)^r(y_1,y_2,\dots,y_r)
		=\push\circ\mantar\circ\teru\circ\mantar( M)^r(y_1,y_2,\dots,y_r).
	\end{equation*}
	Therefore, we obtain the claim.
\end{proof}

\begin{prop}\label{appendix theorem}
	Suppose that %$M\in\Fil_{\D}^{2}\ARI_{\underline\al/\underline\il}$. 
	 {$M\in\Fil_{\D}^{2}\ARI_{\underline\al\ast\underline\il}$}.
	Then the mould $M$ 
	satisfies Ecalle's senary relation 
	%and its swap $\swap(M)$ are pus-neutral \eqref{pus-neutral} 
	for $r=1$, $2$ and $3$.
\end{prop}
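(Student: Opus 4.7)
The plan is to use Lemma~\ref{lem:senary relation 2} (for $r=2,3$), or the direct unfolding of the senary relation (for $r=1$), to reformulate $\teru(M)^r = \push\circ\mantar\circ\teru\circ\mantar(M)^r$ as an explicit polynomial/rational identity in the variables $y_1,\ldots,y_r$, and then to verify it using the hypothesis $M\in\Fil_{\D}^{2}\ARI$ (so that $M^0=M^1=0$) together with the alternality of $M$ and the alternility of $\swap(M)+C$ for some constant mould $C$. By linearity the senary relation may be checked weight by weight, so I may assume $M$ is weight-homogeneous of some weight $w$.

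For $r=1$ both sides of the senary relation are built from $M^0$ and $M^1$, and hence vanish. For $r=2$ the collision terms $\coll^3_{1,2}\tswap(M)^3$ and $\coll^3_{2,3}\tswap(M)^3$ in Lemma~\ref{lem:senary relation 2} also vanish, since they too are constructed from $M^1=0$. After substituting $a=x_3-x_1$, $b=x_2-x_3$, the relation reduces to the cyclic invariance
\[
M^2(a,b)=M^2(-a-b,a).
\]
I plan to derive this from (i) alternality of $M$ at bidegree $(1,1)$, giving $M^2(a,b)+M^2(b,a)=0$, and (ii) alternility of $\swap(M)+C$ at bidegree $(1,1)$, which, using $M^1=0$, becomes $\swap(M)^2(x_1,x_2)+\swap(M)^2(x_2,x_1)=-2C_2$. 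For $w>2$ the left-hand side is a homogeneous polynomial of positive degree while the right-hand side is a scalar, so both sides must vanish. Unpacking the definition of $\swap$ then gives $M^2(a,b)+M^2(a+b,-b)=0$, and combining with alternality yields $M^2(a,b)=M^2(-b,a+b)=M^2(-a-b,a)$.

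For $r=3$ the reformulated identity reads
\begin{align*}
& M^3(y_1,y_2,y_3)-M^3(-y_1-y_2-y_3,y_1,y_2) \\
& \quad = \frac{M^2(-y_2-y_3,y_2)-M^2(y_1,y_2)}{y_1+y_2+y_3}-\frac{M^2(y_1,y_2+y_3)-M^2(y_1,y_2)}{y_3},
\end{align*}
now with genuine $M^2$-contributions from the collision terms. I plan to derive this from three inputs: alternality of $M$ at bidegrees $(1,2)$ and $(2,1)$, which yields three-term relations among the cyclic rearrangements of $M^3$; alternility of $\swap(M)+C$ at bidegrees $(1,2)$ and $(2,1)$, which, after killing the constant shifts $C_2,C_3$ by the same homogeneity argument as in the $r=2$ step, produces identities relating $M^3$-values to $M^2$-values with rational coefficients; and the cyclic symmetry of $M^2$ established in the $r=2$ step.

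The main obstacle is the $r=3$ case: although each input identity is routine, combining them so as to match the two sides of the senary identity requires careful bookkeeping. In particular, one must check that the rational denominators $y_3$ and $y_1+y_2+y_3$ produced on the left by the collision maps coincide, after unfolding $\swap$, with those produced on the right by the alternility corrections, and that the remaining cyclic rearrangements of $M^3$ cancel via the $(1,2)$- and $(2,1)$-alternality relations for $M$ together with the cyclic symmetry of $M^2$ established above.
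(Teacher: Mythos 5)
Your treatment of $r=1$ and $r=2$ is essentially the paper's own argument: reduce via Lemma \ref{lem:senary relation 2}, observe that all $M^0$-, $M^1$-terms (hence the collision terms in depth $3$) vanish, and derive the cyclic invariance $M^2(a,b)=M^2(-a-b,a)$ from depth-$2$ alternality together with depth-$2$ alternility after disposing of $C_2$. (The paper kills $C_2$ by iterating the cyclic relation to get $12C_2=0$; your homogeneity argument works for $w>2$ but you should also treat the weight-$2$ piece, where $M^2$ is constant and alternality itself forces it to vanish, and say a word on why the weight-by-weight reduction is legitimate inside $\ARI_{\underline\al\ast\underline\il}$.) The identity you display for $r=3$ is indeed the correct unfolding of \eqref{senary relation 2}.

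The genuine gap is precisely the step you defer as ``careful bookkeeping'': for $r=3$ the combination of the inputs is the substance of the proposition, and it is never carried out. In the paper this requires a specific chain: a particular \emph{difference} of two depth-$3$ alternility relations (for the decompositions $(-x_2)\hshuffle(-x_4,-x_3)$ and $(-x_2,-x_3)\hshuffle(-x_4)$), combined with mantar-invariance of $M^3$ and with \eqref{mantar invariant of depth=2}, to produce the push-type identity \eqref{eqn:neg circ pus on il} with two collision corrections; the auxiliary collision identities \eqref{eqn:coll34 from il G70} and \eqref{eqn:coll12 from il G70} imported from the $r=2$ step; and a third alternility relation, for $(x_2)\hshuffle(x_3,x_4)$, inserted in \eqref{eqn:pre-senary relation} so that the constant enters as $+3C_3$ and cancels against a $-3C_3$ generated at the end. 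Note that the paper never proves $C_3=0$, and your plan to kill $C_3$ ``by the same homogeneity argument'' actually fails in the weight-$3$ graded piece, where $\swap(M)^3$ is itself constant and $C_3$ is genuinely nonzero for the $\zeta(3)$-type element (this is the depth-$3$ shadow of the correction term $\varphi_{\mathrm{corr}}$ in Definition \ref{defn:dmr}); so the constants must cancel rather than vanish. Your assertion that the leftover cyclic rearrangements of $M^3$ cancel using the $(1,2)$- and $(2,1)$-alternality relations and the cyclic symmetry of $M^2$ is plausible, but verifying that these relations close up with the correct rational (collision) terms on both sides is exactly the content of the paper's computation, and without it the proposal only establishes the senary relation for $r=1,2$.
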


\begin{proof}
	Since we have $M^r(\vecx_r)=0$ for $r=0,1$ by $M\in\Fil_{\D}^{2}\ARI$, it is clear that $M$ satisfies the senary relation for $r=1$. By Lemma \ref{lem:senary relation 2}, it is enough to prove \eqref{senary relation 2} for $r=2,3$.
	
In the case when $r=2$,
	%By using \eqref{shuffle star definition}, 
%	by the definition of coefficients $\hSh{\omega}{\eta}{\alpha}$, we have 
%	\begin{align*}
%		&\tswap(M)^3(x_2,x_3,x_1) + \sum_{\alpha\in X_{\Z}^\bullet}
%		\hSh{(x_2)}{(x_3)}{\alpha}
%		\tswap(M)^{l(\alpha)+1}(x_1,\alpha) + \coll^3_{{1,2}}\circ \tswap(M)^3(x_1,x_2,x_3) \\
%		&=\tswap(M)^3(x_1,x_2,x_3)+\{\tswap(M)^3(x_2,x_3,x_1)+\tswap(M)^3(x_1,x_3,x_2)\} \\
%		&\hspace{3.5cm}+ \coll^3_{{2,3}}\circ \tswap(M)^3(x_1,x_2,x_3) 
%			+ \coll^3_{{1,2}}\circ \tswap(M)^3(x_1,x_2,x_3).
%	\end{align*}
%	By $\swap(M)^1(\vecx_1)=0$, we have $\coll^3_{{i,i+1}}\circ \tswap(M)^3(x_1,x_2,x_3)=0$, 
%	and
by $M\in\ARI_{\underline\al\ast\underline\il}$, there exists a constant mould $C=(C_m)_{m\in\Z_{\geqslant0}}\in\mathcal M(\mathcal F)$ such that 
	$$\sum_{\alpha\in X_{\Z}^\bullet}
		\hSh{(x_2)}{(x_3)}{\alpha}
		\left( \tswap(M)^{l(\alpha)+1}(x_1,\alpha) + \tswap(C)^{l(\alpha)+1}(x_1,\alpha) \right)=0,$$
	that is, we have
	\begin{equation}\label{mantar invariant of depth=2}
		\tswap(M)^3(x_1,x_2,x_3)=-\tswap(M)^3(x_1,x_3,x_2)-2C_2.
	\end{equation}
	Here we use $\tswap(C)^3(x_1,x_2,x_3)=\tswap(C)^3(x_1,x_3,x_2)=C_2$.
	So by using \cite[Lemma 2.5.3]{S-ARIGARI} (see also \cite[Lemma 3.7]{FK}) and \eqref{mantar invariant of depth=2}, we have
	\begin{align}\label{il G70}
		\tswap & (M)^{3}(x_1,x_2,x_3)
		=\swap(M)^{2}(x_2-x_1,x_3-x_1) 
		=M^{2}(x_3-x_1,x_2-x_3)  \\ %\nonumber \\
		&=-M^{2}(x_2-x_3,x_3-x_1) %\nonumber \\ 
		=-\swap(M)^{2}(x_2-x_1,x_2-x_3) \nonumber \\
		&= -\tswap(M)^3(-x_2,-x_1,-x_3) %\nonumber\\
		=\tswap(M)^{3}(-x_2,-x_3,-x_1) +2C_2. \nonumber
	\end{align}
	By using \eqref{il G70} twice, we get
	\begin{align*}
		\tswap & (M)^{3}(x_1,x_2,x_3)
		=\tswap(M)^{3}(x_3,x_1,x_2) +4C_2.
	\end{align*}
	By using this three times, we obtain
	\begin{align*}
		\tswap & (M)^{3}(x_1,x_2,x_3)
		=\tswap(M)^{3}(x_1,x_2,x_3) +12C_2,
	\end{align*}
	that is, we have $C_2=0$.
	So we get
	\begin{align}\label{eqn:il G70,71,72}
		\tswap(M)^3(x_1,x_2,x_3) = \tswap(M)^3(x_2,x_3,x_1).
	\end{align}
	Because $M^1(\vecx_1)=0$, we have $\coll^3_{{2,3}}\circ \tswap(M)^3(x_1,x_2,x_3)=\coll^3_{{1,2}}\circ \tswap(M)^3(x_1,x_2,x_3)=0$. Hence, we obtain \eqref{senary relation 2} for $r=2$.
We note that, by \eqref{il G70}, we have
\begin{align}\label{eqn:coll34 from il G70}
&\coll^4_{{3,4}}\circ \tswap(M)^4(x_1,x_3,x_2,x_4)
=-\coll^4_{{2,3}}\circ \tswap(M)^4(-x_3,-x_2,-x_4,-x_1), \\
\label{eqn:coll12 from il G70}
&\coll^4_{{1,2}}\circ \tswap(M)^4(x_1,x_2,x_3,x_4)
=-\coll^4_{{3,4}}\circ \tswap(M)^4(-x_3,-x_4,-x_1,-x_2).
\end{align}	

In the case when $r=3$, by $M\in\ARI_{\underline\al\ast\underline\il}$,
we have
\begin{align*}
0&=\sum_{\alpha\in X_{\Z}^\bullet}
	\hSh{(-x_2)}{(-x_4,-x_3)}{\alpha}
	\left( \tswap(M)^{l(\alpha)+1}(-x_1,\alpha) + \tswap(C)^{l(\alpha)+1}(-x_1,\alpha) \right) \\
&\quad -\sum_{\alpha\in X_{\Z}^\bullet}
	\hSh{(-x_2,-x_3)}{(-x_4)}{\alpha}
	\left( \tswap(M)^{l(\alpha)+1}(-x_1,\alpha) + \tswap(C)^{l(\alpha)+1}(-x_1,\alpha) \right) \\
&=	\tswap(M)^4(-x_1,-x_4,-x_3,-x_2) +\coll^{4}_{3,4}\circ \tswap(M)^{4}(-x_1,-x_4,-x_2,-x_3) \\
&\quad -\tswap(M)^4(-x_1,-x_2,-x_3,-x_4) -\coll^{4}_{3,4}\circ \tswap(M)^{4}(-x_1,-x_2,-x_4,-x_3) \\
&=	\tswap(M)^4(-x_1,-x_4,-x_3,-x_2) +\coll^{4}_{2,3}\circ \tswap(M)^{4}(-x_1,-x_4,-x_3,-x_2) \\
&\quad -\tswap(M)^4(-x_1,-x_2,-x_3,-x_4) -\coll^{4}_{2,3}\circ \tswap(M)^{4}(-x_1,-x_2,-x_3,-x_4).
\end{align*}
In the last equality, we use \eqref{mantar invariant of depth=2}.

By using this and \cite[Lemma 2.5.3]{S-ARIGARI} (see also \cite[Lemma 3.7]{FK})
in a similar way to \eqref{il G70},
we obtain
	\begin{align}\label{eqn:neg circ pus on il}
		&\tswap(M)^4(x_4,x_1,x_2,x_3)
		=\swap(M)^3(x_1-x_4,x_2-x_4,x_3-x_4) \\
		&=M^3(x_3-x_4,x_2-x_3,x_1-x_2)=M^3(x_1-x_2,x_2-x_3,x_3-x_4) \nonumber \\
		&=\swap(M)^3(x_1-x_4,x_1-x_3,x_1-x_2)=\tswap(M)^4(-x_1,-x_4,-x_3,-x_2) \nonumber \\
		&= \tswap(M)^4(-x_1,-x_2,-x_3,-x_4) +\coll^{4}_{2,3}\circ \tswap(M)^{4}(-x_1,-x_2,-x_3,-x_4) \nonumber\\
		&\quad -\coll^{4}_{2,3}\circ \tswap(M)^{4}(-x_1,-x_4,-x_3,-x_2). \nonumber
	\end{align}
Here, by the alternility of $\swap(M)+C$, the right hand side of \eqref{senary relation 2} is calculated to be 
%	similarly to \eqref{eqn:il G70,71,72}, we have
	\begin{align}\label{eqn:pre-senary relation}
		\tswap&(M)^4(x_2,x_3,x_4,x_1) +\coll^{4}_{{1,2}}\circ \tswap(M)^4(x_1,x_2,x_3,x_4) \\
		=&\tswap(M)^4(x_2,x_3,x_4,x_1) +\coll^{4}_{{1,2}}\circ \tswap(M)^4(x_1,x_2,x_3,x_4) \nonumber\\
		&+\sum_{\alpha\in X_{\Z}^\bullet}
		\hSh{(x_2)}{(x_3,x_4)}{\alpha}
		\left( \tswap(M)^{l(\alpha)+1}(x_1,\alpha) 
			+ \tswap(C)^{l(\alpha)+1}(x_1,\alpha) \right) \nonumber\\
		=& \tswap(M)^4(x_1,x_2,x_3,x_4) 
			+\coll^{4}_{{2,3}}\circ \tswap(M)^4(x_1,x_2,x_3,x_4) + 3C_3 \nonumber\\
		&+\{\tswap(M)^{4}(x_1,x_3,x_2,x_4) 
			+\tswap(M)^{4}(x_1,x_3,x_4,x_2)+\tswap(M)^{4}(x_2,x_3,x_4,x_1) \nonumber\\
		&\qquad+\coll^{4}_{{3,4}}\circ \tswap(M)^{4}(x_1,x_3,x_2,x_4)
			+\coll^{4}_{{1,2}}\circ \tswap(M)^{4}(x_1,x_2,x_3,x_4)\}. \nonumber
	\end{align}
	We calculate the last five terms in the above equation.
	By applying \eqref{eqn:neg circ pus on il} to the first three terms, we have
{\small	
	\begin{align*}
		&\tswap(M)^{4}(x_1,x_3,x_2,x_4) +\tswap(M)^{4}(x_1,x_3,x_4,x_2)+\tswap(M)^{4}(x_2,x_3,x_4,x_1) \\
		&\qquad+\coll^{4}_{{3,4}}\circ \tswap(M)^{4}(x_1,x_3,x_2,x_4)	+\coll^{4}_{{1,2}}\circ \tswap(M)^{4}(x_1,x_2,x_3,x_4) \\
%1-equal
		&=\{\tswap(M)^{4}(-x_3,-x_2,-x_4,-x_1) \\
		&\ + \coll^{4}_{{2,3}}\circ \tswap(M)^{4}(-x_3,-x_2,-x_4,-x_1) - \coll^{4}_{{2,3}}\circ \tswap(M)^{4}(-x_3,-x_4,-x_1,-x_2)\} \\
		&\ +\{\tswap(M)^{4}(-x_3,-x_4,-x_2,-x_1) \\
		&\ + \coll^{4}_{{2,3}}\circ \tswap(M)^{4}(-x_3,-x_4,-x_2,-x_1) - \coll^{4}_{{2,3}}\circ \tswap(M)^{4}(-x_3,-x_2,-x_1,-x_4)\} \\
		&\ +\{\tswap(M)^{4}(-x_3,-x_4,-x_1,-x_2) \\
		&\ + \coll^{4}_{{2,3}}\circ \tswap(M)^{4}(-x_3,-x_4,-x_1,-x_2) - \coll^{4}_{{2,3}}\circ \tswap(M)^{4}(-x_3,-x_2,-x_1,-x_4)\} \\
		&\ +\coll^{4}_{{3,4}}\circ \tswap(M)^{4}(x_1,x_3,x_2,x_4)
		+\coll^{4}_{{1,2}}\circ \tswap(M)^{4}(x_1,x_2,x_3,x_4).
\intertext{By rearrangements, we get}
%2-equal
		&=\{\tswap(M)^{4}(-x_3,-x_2,-x_4,-x_1) 
			+ \tswap(M)^{4}(-x_3,-x_4,-x_2,-x_1) \\
		&\quad +\tswap(M)^{4}(-x_3,-x_4,-x_1,-x_2) \\
		&\quad +\coll^{4}_{{2,3}}\circ \tswap(M)^{4}(-x_3,-x_2,-x_4,-x_1) 
			- \coll^{4}_{{2,3}}\circ \tswap(M)^{4}(-x_3,-x_2,-x_1,-x_4) \} \\
		&\quad +\{ \coll^{4}_{{2,3}}\circ \tswap(M)^{4}(-x_3,-x_4,-x_2,-x_1) 
			+\coll^{4}_{{3,4}}\circ \tswap(M)^{4}(x_1,x_3,x_2,x_4) \} \\
		&\quad +\{
			- \coll^{4}_{{2,3}}\circ \tswap(M)^{4}(-x_3,-x_2,-x_1,-x_4)
		+\coll^{4}_{{1,2}}\circ \tswap(M)^{4}(x_1,x_2,x_3,x_4) \}.
\intertext{By applying \eqref{mantar invariant of depth=2} to the fifth term, \eqref{eqn:coll34 from il G70} to the seventh term and \eqref{eqn:coll12 from il G70} to the ninth term, we have}
%3-equal
		&=\{\tswap(M)^{4}(-x_3,-x_2,-x_4,-x_1) +\tswap(M)^{4}(-x_3,-x_4,-x_2,-x_1) \\
		&\quad  +\tswap(M)^{4}(-x_3,-x_4,-x_1,-x_2)\\
		&\quad + \coll^{4}_{{2,3}}\circ \tswap(M)^{4}(-x_3,-x_2,-x_4,-x_1) 
			+\coll^{4}_{{3,4}}\circ \tswap(M)^{4}(-x_3,-x_4,-x_2,-x_1)\} \\
		&\quad +\{ \coll^{4}_{{2,3}}\circ \tswap(M)^{4}(-x_3,-x_4,-x_2,-x_1) 
			-\coll^4_{{2,3}}\circ \tswap(M)^4(-x_3,-x_2,-x_4,-x_1) \} \\
		&\quad -\{ \coll^{4}_{{2,3}}\circ \tswap(M)^{4}(-x_3,-x_2,-x_1,-x_4) 
			+\coll^4_{{3,4}}\circ \tswap(M)^4(-x_3,-x_4,-x_1,-x_2) \}.
\intertext{Since the sum of the first five terms is equal to $-3C_3$ by the alternility of $\swap(M) +C$,
the application of \eqref{mantar invariant of depth=2} to the last term gives}
%4-equal
		&=-3C_3 \\
		&\quad -\{ \coll^{4}_{{2,3}}\circ \tswap(M)^{4}(-x_3,-x_2,-x_1,-x_4) 
			-\coll^4_{{2,3}}\circ \tswap(M)^4(-x_3,-x_1,-x_2,-x_4) \} \\
		&=-3C_3.
	\end{align*}
	Therefore, by applying this equation to \eqref{eqn:pre-senary relation}, we calculate
	\begin{align*}
		&\tswap(M)^4(x_2,x_3,x_4,x_1) +\coll^{4}_{{1,2}}\circ \tswap(M)^4(x_1,x_2,x_3,x_4) \\
		&= \{\tswap(M)^4(x_1,x_2,x_3,x_4) +\coll^{4}_{{2,3}}\circ \tswap(M)^4(x_1,x_2,x_3,x_4) + 3C_3 \}
		-3C_3 \\
		&= \tswap(M)^4(x_1,x_2,x_3,x_4) +\coll^{4}_{{2,3}}\circ \tswap(M)^4(x_1,x_2,x_3,x_4).
	\end{align*}
}
	Hence, we obtain \eqref{senary relation 2} for $r=3$.
\end{proof}

We expect that our above calculations would help us to
construct  a still conjectural inclusion
$\Fil_{\D}^{2}\ARI_{\underline\al\ast\underline\il}\hookrightarrow
\ARI_{\pspush/\pusnu}.
$

\bigskip
%\noindent
Now the proof of Theorem \ref{thm:senary check} is immediate.
Remark \ref{rem:SaSch} says $\ma_{\tilde f}\in \Fil_{\D}^{2}\ARI_{\underline\al\ast\underline\il}$
for $\tilde f\in \dmr$.
By Proposition \ref{appendix theorem}, we see that
$M=\ma_{\tilde f}$ satisfies
Ecalle's senary relation \eqref{senary relation} for $r=1$, $2$ and $3$.
Whence our claim is shown.
\qed

\end{document}